\documentclass[11pt,a4paper]{amsart}
\usepackage[utf8]{inputenc}
\usepackage[T1]{fontenc}
\usepackage[a4paper, top=25mm, bottom=30mm, left=25mm, right=25mm]{geometry}
\usepackage{tikz}
\usetikzlibrary{
    graphs,
    topaths,
    arrows,
    shapes,
    shapes.geometric,
    positioning,
    decorations.markings,
    backgrounds,
    matrix,
    calc
}
\usepackage{amsfonts}
\usepackage{mathtools}
\usepackage{amssymb}
\usepackage{graphicx}
\usepackage{amsthm}
\usepackage[english]{babel}
\usepackage{float}
\usepackage{enumitem}
\usepackage{caption}
\usepackage{url}
\usepackage{xcolor}
\tikzstyle arrowstyle=[scale=1]
\tikzstyle reverse directed=[postaction={decorate,decoration={markings,
    mark=at position .65 with {\arrowreversed[arrowstyle]{stealth};}}}]

\tikzset{
    directed/.style={
        draw,
        -,
        postaction={
            decorate,
            decoration={
                markings,
                mark=at position 0.545 with {\arrow[scale=1.4]{latex}}
            }
        }
    }
}

\newcommand{\auttfg}{\Aut^{\mathbf{TF}}(G)}
\newcommand{\Aut}{\mbox{Aut}}
\newcommand{\ADC}{\mbox{{\rm{\textbf{ADC}}}}}
\newcommand{\CDC}{\mbox{{\rm{\textbf{CDC}}}}}
\newcommand{\CG}{\mbox{{\rm{\textbf{CG}}}}}
\newcommand{\opp}{^{\mathrm{opp}}}
\newcommand{\id}{{\rm id}}
\newcommand{\ZZ}{\mathbb Z}

\theoremstyle{plain}
\newtheorem{Thm}{Theorem}[section]
\newtheorem{Prop}[Thm]{Proposition}

\newtheorem{Lem}[Thm]{Lemma}
\newtheorem{Con}[Thm]{Construction}
\theoremstyle{definition}
\newtheorem{Def}{Definition}[section]
\theoremstyle{remark}
\newtheorem{Rem}[Thm]{Remark}
\newtheorem{Ex}{Example}[section]

\begin{document}
\setlength{\abovedisplayskip}{5pt plus 2pt minus 1pt}
\setlength{\belowdisplayskip}{5pt plus 2pt minus 1pt}
\setlength{\abovedisplayshortskip}{2pt plus 1pt}
\setlength{\belowdisplayshortskip}{5pt plus 2pt minus 1pt}

\title[Lifting and Folding]{Lifting and Folding:\\
A Framework for Unstable Graphs and TF-Cousins}

\author[R. Mizzi]{Russell Mizzi}
\address{University of Malta \\ Malta}
\email{russell.mizzi@um.edu.mt}

\subjclass[2020]{Primary 05C25; Secondary 05C60, 20B25}
\keywords{canonical double cover, TF-isomorphism, unstable graph, lifting,
folding, voltage graph, claw graph, Petersen graph}

\maketitle

\begin{abstract}
A graph $G$ is \emph{unstable} if its canonical double cover $\CDC(G)$ has
strictly more automorphisms than $\Aut(G)\times\ZZ_2$. A related but
distinct question is whether two non-isomorphic graphs can share the same CDC.
We place both problems within
a unified framework of \emph{lifting} and \emph{guided folding}, revealing
that both are governed by the same algebraic datum: the conjugacy classes of
strongly switching involutions in $\Aut(\CDC(G))$.

Our approach is based on \emph{two-fold isomorphisms} (TF-isomorphisms), a
generalisation of graph isomorphism, and on lifting and guided folding
adapted from voltage-graph theory. Lifting a TF-isomorphism
$(\alpha,\beta):G\to H$ produces a digraph isomorphic to the alternating
double cover of $G$. Folding this digraph back always yields a graph
TF-isomorphic to $G$: if the result is non-isomorphic to $G$, the two form
a TF-cousin pair; if it coincides with $G$, then $(\alpha,\beta)$ is a
non-trivial TF-automorphism and $G$ is unstable. The two problems are thus
two aspects of the same construction, addressed simultaneously. Each guide
corresponds to a switching involution of $\Aut(\CDC(G))$, and distinct
conjugacy classes of such involutions produce distinct non-isomorphic base
graphs sharing the same CDC, a correspondence due to Pacco and
Scapellato.

The framework generates TF-cousin pairs and unstable graphs of any order
from a simple seed pair $(C_k\cup C_k,\, C_{2k})$ for odd $k$. We introduce the
\emph{claw graph} family $\CG(n)$ as a concrete infinite family of examples
and prove that $\CG(n)$ and its companion $\CG'(n)$ are TF-cousins if and
only if $n$ is odd. For $n=1$ the pair $(\CG(1),\CG'(1))$ consists of the
Petersen graph and a companion cubic graph on $10$ vertices, with the Desargues
graph as their common CDC. For each odd $n\geq 3$ the construction yields a
new pair of non-isomorphic cubic graphs sharing the same CDC that does not
appear to be isomorphic to any previously named graph family. We
conjecture that in every TF-cousin pair one member contains two
vertex-disjoint copies of $C_k$ and the other contains $C_{2k}$, for some
odd~$k$, and that every unstable asymmetric graph contains both $C_k$ and
$C_{2k}$ for some odd~$k$. The first statement has been verified
computationally for all connected graphs on at most $9$ vertices.
\end{abstract}

\section{Preliminaries}
\label{sec:prelim}

We consider simple \emph{mixed graphs}, that is, graphs which may contain
both undirected edges and directed arcs. An undirected edge $\{u,v\}$ in a
mixed graph is treated as the union of two directed arcs, $(u,v)$ and $(v,u)$.
A mixed graph consisting solely of undirected edges is referred to as
a \emph{graph}. Throughout this paper we consider only connected graphs unless
explicitly stated otherwise.
The \emph{converse} (or \emph{opposite}) of a mixed graph $G$,
denoted by $G\opp$, is the mixed graph on the same vertex set obtained by
reversing every arc, so that $(u,v)\in A(G\opp)$ if and only if
$(v,u)\in A(G)$~\cite{bangjensen}. An undirected edge, being the union of
two opposite arcs, is unaffected; thus $G\opp=G$ for every graph, and the
distinction matters only for mixed graphs and digraphs.

Let $G$ and $H$ be two graphs, and let $\alpha$, $\beta$ be bijections from
$V(G)$ to $V(H)$. The pair $(\alpha,\beta)$ is a
\emph{two-fold isomorphism}
(or \emph{TF-isomorphism}) from $G$ to $H$ if $(u,v)$ is an arc of $G$ if and only if
$(\alpha(u),\beta(v))$ is an arc of $H$. This notion was introduced by
Zelinka~\cite{zelinka2} under the name \emph{isotopy of digraphs} and
later formalised as TF-isomorphism by Lauri et al.~\cite{lms1}. We say that
$G$ and $H$ are \emph{TF-isomorphic} and write
$G\cong^{\mathbf{TF}}H$. When $\alpha=\beta$, this reduces
to an ordinary isomorphism. When $\alpha\neq\beta$, the pair $(\alpha,\beta)$
is a \emph{non-trivial} TF-isomorphism:
$\alpha$ and $\beta$ need not individually be isomorphisms from $G$ to $H$,
as illustrated in Figure~\ref{fig:firstexample}. Two non-isomorphic graphs
that are TF-isomorphic are called \emph{TF-cousins}.

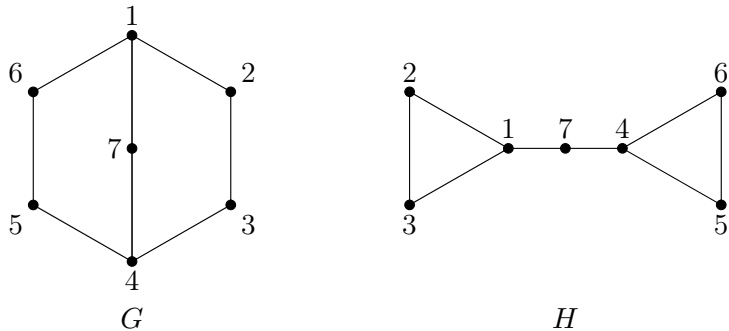
\begin{figure}[h]
\centering
\begin{tikzpicture}[scale=1.5]
\begin{scope}
\draw[-](0,0)--++(-30:1)coordinate(A2)[fill=black]circle(1.2pt)
    node[above right](){2};
\draw[-](A2)--++(-90:1)coordinate(A3)[fill=black]circle(1.2pt)
    node[below right](){3};
\draw[-](A3)--++(-150:1)coordinate(A4)[fill=black]circle(1.2pt)
    node[below](){4};
\draw[-](A4)--++(-210:1)coordinate(A5)[fill=black]circle(1.2pt)
    node[below left](){5};
\draw[-](A5)--++(-270:1)coordinate(A6)[fill=black]circle(1.2pt)
    node[above left](){6};
\draw[-](A6)--++(-330:1)coordinate(A1)[fill=black]circle(1.2pt)
    node[above](){1};
\draw(A1)--(A4)coordinate[midway](A7);
\draw(A7)[fill=black]circle(1.2pt)node[left](){7};
\draw[-](A1)--(A7);
\draw[-](A7)--(A4);
\draw(A7)++(0,-1.5)node[](){$G$};
\end{scope}
\begin{scope}[xshift=3.3cm, yshift=-1cm]
\draw[-](0,0)--++(150:1)coordinate(A2C)[fill=black]circle(1.2pt)
    node[above](){2};
\draw[-](A2C)--++(270:1)coordinate(A3C)[fill=black]circle(1.2pt)
    node[below](){3};
\draw[-](A3C)--++(390:1)coordinate(A1C)[fill=black]circle(1.2pt)
    node[above](){1};
\draw[-](A1C)--++(0:0.5)coordinate(A7C)[fill=black]circle(1.2pt)
    node[above](){7};
\draw[-](A7C)--++(0:0.5)coordinate(A4C)[fill=black]circle(1.2pt)
    node[above](){4};
\draw[-](A4C)--++(30:1)coordinate(A6C)[fill=black]circle(1.2pt)
    node[above](){6};
\draw[-](A6C)--++(-90:1)coordinate(A5C)[fill=black]circle(1.2pt)
    node[below](){5};
\draw[-](A5C)--(A4C);
\draw(A7C)++(0,-1.5)node[](){$H$};
\end{scope}
\end{tikzpicture}
\caption{A non-trivial TF-isomorphism from $G$ to $H$, with
         $\alpha=(2\ 5)$ and $\beta=(1\ 4)(3\ 6)$.}
\label{fig:firstexample}
\end{figure}

When $G=H$, the pair $(\alpha,\beta)$ is a
\emph{TF-automorphism}, which is \emph{non-trivial}
when $\alpha\neq\beta$. The collection of all TF-automorphisms of $G$, with
multiplication $(\alpha,\beta)(\gamma,\delta)=(\alpha\gamma,\beta\delta)$, is
a subgroup of $S_{V(G)}\times S_{V(G)}$ called the
\emph{two-fold automorphism group} of $G$,
denoted by $\auttfg$. Identifying each $\alpha\in\Aut(G)$ with the TF-automorphism
$(\alpha,\alpha)$ gives the inclusion $\Aut(G)\subseteq\auttfg$; equality
holds when $G$ has no non-trivial TF-automorphisms. However,
an \emph{asymmetric} graph $G$ (one with $|\Aut(G)|=1$) may still admit
non-trivial TF-automorphisms~\cite{lms2}.

The \emph{canonical double cover} of $G$, denoted $\CDC(G)$, has vertex set
$V(G)\times\ZZ_2$; the pairs $((u,0),(v,1))$ and $((u,1),(v,0))$ are arcs of
$\CDC(G)$ whenever $(u,v)$ is an arc of~$G$. The CDC coincides with the
direct product $G\times K_2$ \cite{imrich,klin112}
and is always bipartite with colour classes $V_0=V(G)\times\{0\}$ and
$V_1=V(G)\times\{1\}$.

\begin{Def}\label{def:switching}
An automorphism $\phi\in\Aut(\CDC(G))$ is a \emph{switching involution} if
$\phi^2=\id$ and $\phi$ interchanges the two colour classes $V_0$, $V_1$ of
$\CDC(G)$. It is \emph{strongly switching} if, in addition, no edge of
$\CDC(G)$ joins a vertex to its own image under $\phi$, that is,
$\{u,\phi(u)\}\notin E(\CDC(G))$ for every $u\in V(\CDC(G))$.
\end{Def}

It is observed in~\cite{klin112} that $\Aut(\CDC(G))$ equals
$\Aut(G)\times\ZZ_2$ in some cases and properly contains it in others. A
graph is said to be \emph{unstable} if $\Aut(G)\times\ZZ_2$
is a proper subgroup of $\Aut(\CDC(G))$; the elements of
$\Aut(\CDC(G))\setminus(\Aut(G)\times\ZZ_2)$ are called \emph{unexpected
automorphisms} of $\CDC(G)$~\cite{Ars}. The \emph{index of
instability} of $G$ is
\[
    \frac{|\Aut(G\times K_2)|}{2|\Aut(G)|}.
\]
This is a positive integer, equal to $1$ exactly when $G$ is stable.

The following two cases are excluded from consideration:
\begin{itemize}
\item[(a)] If $G$ is bipartite, then $G\times K_2$ consists of two copies
    of $G$, so the canonical double cover is disconnected and
    $G$ is unstable unless $\Aut(G)$ is trivial.
\item[(b)] If two vertices $u$, $v$ of $G$ share the same neighbourhood
    (that is, $G$ is not vertex-determining), then
    $(\alpha,\id)$ with $\alpha=(u\ v)$ is a non-trivial TF-automorphism.
\end{itemize}
\noindent Unless stated otherwise, every graph we consider is
non-bipartite and vertex-determining.

Standard graph-theoretic terminology follows~\cite{bondy,Harary01};
information on automorphism groups of graphs can be found in~\cite{lauri2}.
In a cycle $C_{2m}$, two vertices at distance $m$ are called
\emph{antipodal}.

\section{Background: Instability and TF-Automorphisms}
\label{sec:unstable}

The following result from~\cite{Ars} is fundamental to both problems studied in this paper.

\begin{Thm}[\cite{Ars}]\label{thm:stabilitytfmain}
Let $G$ be a graph. Then
\[
    \Aut(\CDC(G)) = \Aut^{\mathbf{TF}}(G)\rtimes\ZZ_2.
\]
In particular, $G$ is unstable if and only if it has a non-trivial TF-auto\-morphism.
\end{Thm}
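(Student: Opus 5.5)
The plan is to realise $\auttfg$ inside $\Aut(\CDC(G))$ as the subgroup of colour-preserving automorphisms, and then to supply a complement of order $2$. Here $G$ is connected and non-bipartite, so $\CDC(G)$ is connected and bipartite. Hence its bipartition $\{V_0,V_1\}$ is unique, and every automorphism either fixes both colour classes or swaps them.

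\emph{Step 1: the embedding.} Given $(\alpha,\beta)\in\auttfg$, I define $\phi_{\alpha,\beta}(u,0)=(\alpha(u),0)$ and $\phi_{\alpha,\beta}(u,1)=(\beta(u),1)$. The definition of a TF-automorphism says exactly that $((u,0),(v,1))$ is an edge if and only if $((\alpha u,0),(\beta v,1))$ is an edge. Since edges of $\CDC(G)$ are undirected, this suffices, so $\phi_{\alpha,\beta}$ is a colour-preserving automorphism.

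Conversely, a colour-preserving automorphism $\phi$ restricts to bijections of $V_0$ and $V_1$, and these define $\alpha$ and $\beta$. The edge condition then gives back the TF condition, because $(u,v)\in A(G)$ exactly when $((u,0),(v,1))\in E(\CDC(G))$. The map $(\alpha,\beta)\mapsto\phi_{\alpha,\beta}$ is visibly a homomorphism. So $\auttfg$ is isomorphic to the stabiliser $K$ of $V_0$ in $\Aut(\CDC(G))$. This stabiliser has index at most $2$ and is normal as the kernel of the action on $\{V_0,V_1\}$.

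\emph{Step 2: the complement.} Let $\tau(u,i)=(u,1-i)$. It is an automorphism because $G$ is undirected: $((u,0),(v,1))$ is an edge if and only if $((u,1),(v,0))$ is. It is an involution that swaps the colour classes, so $\tau\notin K$. Therefore $K\cap\langle\tau\rangle=1$ and $K\langle\tau\rangle=\Aut(\CDC(G))$, which gives $\Aut(\CDC(G))=\auttfg\rtimes\ZZ_2$. Conjugation by $\tau$ acts on $K$ as $(\alpha,\beta)\mapsto(\beta,\alpha)$.

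\emph{Step 3: the instability criterion.} The subgroup $\Aut(G)\times\ZZ_2$ corresponds under this identification to $\{(\alpha,\alpha)\}\times\langle\tau\rangle$. By the index count,
\[
|\Aut(\CDC(G))| = 2|\auttfg|,
\]
so $\Aut(G)\times\ZZ_2$ is proper if and only if $\auttfg\ne\Aut(G)$. That in turn holds if and only if some TF-automorphism has $\alpha\neq\beta$.

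The main point needing care is the uniqueness of the bipartition, which uses connectivity of $\CDC(G)$. This holds because $G$ is connected and non-bipartite: an odd closed walk in $G$ lifts to a walk from $(u,0)$ to $(u,1)$. If $G$ were bipartite, some automorphisms could mix the two components, and the clean dichotomy between colour-preserving and colour-swapping maps would fail. Everything else is routine verification.
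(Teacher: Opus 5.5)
The paper gives no proof of this theorem, quoting it from~\cite{Ars}, and your argument is correct. It is the standard one: you identify $\auttfg$ with the colour-preserving subgroup of $\Aut(\CDC(G))$ and take the sheet swap $\tau$ as an order-$2$ complement. The uniqueness of the bipartition of the connected cover, on which this rests, is exactly the fact the paper itself invokes in the proof of Theorem~\ref{prop:involution04}.

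You are right to make the standing connected, non-bipartite hypothesis explicit, because the statement fails without it. For $G=K_2$ one has $|\Aut(\CDC(K_2))|=8$ while $|\auttfg|=2$, so $K_2$ is unstable yet has no non-trivial TF-automorphism.
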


Figure~\ref{fig:gammacockroach} shows the smallest known unstable asymmetric
graph, constructed in~\cite{cockroachpaper} by finding a permutation
$\gamma$ of its vertex set such that $(\gamma,\gamma^{-1})$ is a non-trivial
TF-automorphism; it is the first member of an infinite family of such graphs.

Stability has been studied intensively in recent years,
particularly for circulant and Cayley graphs. Qin, Xia and
Zhou~\cite{qin2019} investigate the stability of circulant graphs, and
Fernandez and Hujdurovi\'{c}~\cite{fernandez2022} study the canonical double
covers of circulants. Morris~\cite{morris2021} studies automorphisms of
direct products of Cayley graphs on abelian groups, Hujdurovi\'{c} and
Mitrovi\'{c}~\cite{hujdurovicmitrovic2024} give conditions implying
stability, and Hujdurovi\'{c} and Kov\'{a}cs~\cite{hujdurovickovacs2025}
treat the stability of Cayley graphs by means of Schur rings. The present
paper is concerned with arbitrary graphs rather than a prescribed
algebraic family, and with the construction of examples rather than
criteria for stability.

\begin{figure}[H]
\centering
\begin{tikzpicture}[scale=1.5]
\draw(0,0)coordinate(A6)--++(-60:1)coordinate(A7)
    --++(-180:1)coordinate(A12)--++(60:1);
\draw(A6)--++(-40:2)coordinate(A1);
\draw(A6)--++(220:2)coordinate(A3);
\draw(A1)--++(-90:2)coordinate(A5);
\draw(A3)--++(-90:2)coordinate(A4);
\draw(A5)--++(135:1.25)coordinate(A10);
\draw(A4)--++(45:1.25)coordinate(A9);
\draw(A9)--(A10)coordinate[midway](M);
\draw(M)++(-90:1.9)coordinate(A2);
\draw(A4)--(A2);\draw(A5)--(A2);
\draw(A4)--++(-15:1)coordinate(A11);
\draw(A5)--++(195:1)coordinate(A8);
\draw(A11)--(A8);
\draw(A1)--(A12);\draw(A2)--(A11);\draw(A3)--(A10);
\draw(A1)[fill=black]circle(1.2pt)node[right](){$1$};
\draw(A2)[fill=black]circle(1.2pt)node[below](){$2$};
\draw(A3)[fill=black]circle(1.2pt)node[left](){$3$};
\draw(A4)[fill=black]circle(1.2pt)node[left](){$4$};
\draw(A5)[fill=black]circle(1.2pt)node[right](){$5$};
\draw(A6)[fill=black]circle(1.2pt)node[above](){$6$};
\draw(A7)[fill=black]circle(1.2pt)node[right](){$7$};
\draw(A8)[fill=black]circle(1.2pt)node[above](){$8$};
\draw(A9)[fill=black]circle(1.2pt)node[above](){$9$};
\draw(A10)[fill=black]circle(1.2pt)node[above](){$10$};
\draw(A11)[fill=black]circle(1.2pt)node[above](){$11$};
\draw(A12)[fill=black]circle(1.2pt)node[left](){$12$};
\end{tikzpicture}
\caption{The smallest known unstable asymmetric graph~\cite{cockroachpaper}.
         With $\gamma=(1\ 2\ 3)(4\ 5\ 6)(7\ 8\ 9)(10\ 11\ 12)$, the pair
         $(\gamma,\gamma^{-1})$ is a non-trivial TF-automorphism.}
\label{fig:gammacockroach}
\end{figure}
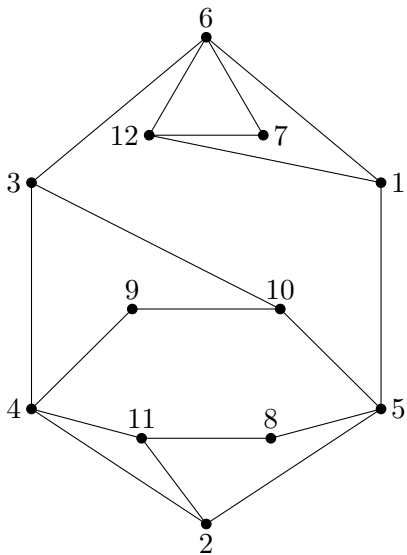

\section{\textbf{A}-Trails and Alternating Double Covers}
\label{sec:atrails}

The standard notion of a path carries an implicit direction even for
undirected graphs: under an ordinary isomorphism $\alpha$, the arcs $(u,v)$
and $(v,w)$ map to $(\alpha(u),\alpha(v))$ and $(\alpha(v),\alpha(w))$,
sharing the common vertex $\alpha(v)$. Under a TF-isomorphism $(\alpha,\beta)$,
however, $(u,v)$ maps to $(\alpha(u),\beta(v))$ and $(w,v)$ maps to
$(\alpha(w),\beta(v))$: the shared vertex is $\beta(v)$. Maintaining a common
vertex between the images of successive arcs therefore requires alternating
directions in the original path, motivating the notion of an \textbf{A}-trail
\cite{dissertation}.

A sequence $P=(a_1,a_2,\dots,a_k)$ of arcs in a mixed graph $G$ is an
\emph{alternating trail} (\textbf{A}-\emph{trail}) if each
consecutive pair $a_i$, $a_{i+1}$ shares exactly one vertex, and whenever
$a_i=(x,y)$ either $a_{i+1}=(x,z)$ or $a_{i+1}=(z,y)$ for some vertex~$z$.
The \emph{first} and \emph{last} vertices of $P$ are the vertices of $a_1$
and $a_k$, respectively, that are not shared with their neighbour in the
sequence.

A mixed graph $G$ is \textbf{A}-\emph{connected} if every
pair of vertices is joined by an \textbf{A}-trail. Every connected graph is
\textbf{A}-connected; mixed graphs need not be.

The \emph{alternating double cover} $\ADC(G)$ of a mixed graph $G$ is the
direct product of $G$ with the digraph $\vec{D}$ having $V(\vec{D})=\{0,1\}$ and unique
arc $(0,1)$. Thus $V(\ADC(G))=V(G)\times\{0,1\}$, with
$((u,0),(v,1))\in A(\ADC(G))$ if and only if $(u,v)\in A(G)$; all vertices
$(u,0)$ are sources and all $(u,1)$ are sinks.

\begin{Thm}[\cite{lms1}]\label{thm:cdc}
Graphs $G$ and $H$ are TF-isomorphic if and only if $\CDC(G)\cong\CDC(H)$.
\end{Thm}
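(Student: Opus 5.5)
Both directions go through the observation that a TF-isomorphism is an isomorphism between the alternating double covers, so I would use $\ADC$ as the intermediate object. \emph{Step 1.} For graphs, $\CDC(G)$ is the underlying undirected graph of $\ADC(G)$. Every undirected edge $\{u,v\}$ of $G$ gives both arcs $((u,0),(v,1))$ and $((v,0),(u,1))$ in $\ADC(G)$. Forgetting orientation therefore yields exactly the edge set of $\CDC(G)$. Conversely, $\ADC(G)$ is recovered from $\CDC(G)$ by orienting every edge from $V_0$ to $V_1$.

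\emph{Step 2 (forward direction).} Suppose $(\alpha,\beta):G\to H$ is a TF-isomorphism. Define $\phi:V(G)\times\ZZ_2\to V(H)\times\ZZ_2$ by $\phi(u,0)=(\alpha(u),0)$ and $\phi(v,1)=(\beta(v),1)$. This is a bijection, since $\alpha$ and $\beta$ are. By definition, $((u,0),(v,1))$ is an arc of $\ADC(G)$ iff $(u,v)\in A(G)$, iff $(\alpha(u),\beta(v))\in A(H)$, iff $(\phi(u,0),\phi(v,1))$ is an arc of $\ADC(H)$. So $\phi$ is an isomorphism $\ADC(G)\cong\ADC(H)$, and by Step 1 it is also an isomorphism $\CDC(G)\cong\CDC(H)$.

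\emph{Step 3 (converse).} Suppose $\psi:\CDC(G)\to\CDC(H)$ is an isomorphism. I first reduce to the case where $\psi$ preserves the colour classes, i.e.\ $\psi(V_0)=V_0$. Then I read off $\alpha$ from $\psi$ restricted to $V_0$ and $\beta$ from $\psi$ restricted to $V_1$. Since $\psi$ preserves edges and $V_0$-to-$V_1$ edges correspond to arcs of $G$, the pair $(\alpha,\beta)$ is a TF-isomorphism.

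The reduction needs care. Compose $\psi$ with the swap $(x,i)\mapsto(x,1-i)$ of $\CDC(H)$, which is an automorphism because $G$ and $H$ are undirected. This handles the case $\psi(V_0)=V_1$.

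The main obstacle is when $\CDC(G)$ is disconnected, so that $\psi$ may preserve colours on some components and swap them on others. This is where bipartiteness is the issue. The standing assumption is that graphs are connected and non-bipartite, so $\CDC(G)$ is connected and its bipartition is unique; then $\psi$ either preserves or swaps the classes globally. The connectedness of $\CDC(G)$ should follow from an odd cycle joining $(u,0)$ to $(u,1)$.

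In full generality I would instead work component by component. A component of $\CDC(G)$ is either a copy of a bipartite component $C$ of $G$ with both sides available, or the connected cover of a non-bipartite component. In the non-bipartite case I swap locally. In the bipartite case, a component $\{(x,0):x\in A\}\cup\{(y,1):y\in B\}$ is mirrored by the other copy, so swapping both copies together is still consistent. In each case I apply the swap per component of $\CDC(H)$, which is itself an automorphism, and then assemble the resulting pair $(\alpha,\beta)$.
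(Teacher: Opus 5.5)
Your forward direction is correct in full generality. Your converse is correct whenever $\CDC(G)$ is connected, i.e.\ under the paper's standing hypotheses. This is the standard argument; the paper itself gives no proof and only cites \cite{lms1}. It covers every use of the theorem in the paper. The disconnected and bipartite instances (the seed pair, Step~2 of Proposition~\ref{prop:clawgeneralisation}) need only the forward direction, and the converse is invoked only for connected non-bipartite graphs, as in Remark~\ref{rem:q5}.

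Your final paragraph, however, has a genuine gap. Post-composing $\psi$ with the swap on a mirror pair $K'\cup\bar K'$ of $\CDC(H)$ exchanges the images of $\psi^{-1}(K')$ and $\psi^{-1}(\bar K')$, and flips the colour behaviour of \emph{both}. Since $\psi$ need not carry mirror pairs of $\CDC(G)$ to mirror pairs of $\CDC(H)$, these two preimages may behave differently. In that case the parity of the number of colour-reversing preimages of that mirror pair is invariant under all your swaps.

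For instance, let $G=H=C_1\cup C_2$, where $C_1,C_2$ are isomorphic connected bipartite graphs with parts $A_i,B_i$. Let $\pi\in\Aut(G)$ exchange $C_1$ and $C_2$ with $\pi(A_1)=A_2$. Put $K_i=(A_i\times\{0\})\cup(B_i\times\{1\})$ and $\bar K_i=(B_i\times\{0\})\cup(A_i\times\{1\})$. Let $\psi$ be the identity on $K_1\cup\bar K_2$ and $(x,\varepsilon)\mapsto(\pi(x),1-\varepsilon)$ on $\bar K_1\cup K_2$. This is an automorphism of $\CDC(G)$, and the mirror pair $\{K_1,\bar K_1\}$ receives $K_1$ colour-preservingly and $K_2$ colour-reversingly. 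So no combination of your swaps makes $\psi$ colour-preserving, even though the identity is such a map.

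The missing idea is to re-match components globally rather than repair $\psi$ locally. The swap $\tau_G\colon(x,\varepsilon)\mapsto(x,1-\varepsilon)$ shows that $\CDC(G)$ has as many components of coloured type $(X;P,Q)$ (side $P$ in $V_0$) as of type $(X;Q,P)$. The same holds for $H$, and $\psi$ shows that the uncoloured counts agree. Hence the coloured counts agree: trivially if $X$ has a colour-reversing automorphism, and otherwise because each is half the uncoloured count. The components can therefore be paired by colour-preserving isomorphisms, and you then read off $(\alpha,\beta)$ as in your Step~3.
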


\begin{Thm}[\cite{dissertation}]\label{thm:adc}
Mixed graphs $G$ and $H$ are TF-isomorphic if and only if $\ADC(G)\cong\ADC(H)$.
\end{Thm}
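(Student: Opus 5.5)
The plan is to prove both directions of Theorem~\ref{thm:adc} directly, by translating between a TF-isomorphism of $G$ and $H$ and an isomorphism of $\ADC(G)$ and $\ADC(H)$ that respects the source/sink structure.

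For the forward direction, let $(\alpha,\beta)$ be a TF-isomorphism from $G$ to $H$. Define $\Phi:V(G)\times\{0,1\}\to V(H)\times\{0,1\}$ by
\[
\Phi(u,0)=(\alpha(u),0),\qquad \Phi(u,1)=(\beta(u),1).
\]
This is a bijection because $\alpha$ and $\beta$ are bijections. Now check the arcs. We have $((u,0),(v,1))\in A(\ADC(G))$ exactly when $(u,v)\in A(G)$. This holds exactly when $(\alpha(u),\beta(v))\in A(H)$, which holds exactly when $((\alpha(u),0),(\beta(v),1))\in A(\ADC(H))$. No other arcs exist in either ADC, since every arc runs from layer $0$ to layer $1$. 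Hence $\Phi$ is an isomorphism.

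For the converse, let $\Phi:\ADC(G)\to\ADC(H)$ be an isomorphism. The goal is to show that, after possibly modifying $\Phi$, it maps $V(G)\times\{0\}$ onto $V(H)\times\{0\}$. Once this holds, set $\alpha(u)$ to be the first coordinate of $\Phi(u,0)$ and $\beta(v)$ to be the first coordinate of $\Phi(v,1)$. Reading the equivalences above backwards then shows that $(\alpha,\beta)$ is a TF-isomorphism.

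This layer-preservation step is the main obstacle. Vertices with positive out-degree are sources and vertices with positive in-degree are sinks, and an isomorphism preserves in- and out-degrees. So every non-isolated vertex stays in its own layer. The only trouble comes from isolated vertices of $\ADC(G)$. A vertex $(u,0)$ is isolated exactly when $u$ has out-degree $0$ in $G$, and $(u,1)$ is isolated exactly when $u$ has in-degree $0$.

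The fix is to redistribute the isolated vertices. Let $I_\epsilon(G)$ denote the set of isolated vertices in layer $\epsilon$. Since $\Phi$ fixes the layers of non-isolated vertices, $|V(G)|-|I_0(G)|=|V(H)|-|I_0(H)|$. We also have $|V(G)|=|V(H)|$, because the ADCs have the same order $2|V(G)|=2|V(H)|$. Therefore $|I_0(G)|=|I_0(H)|$, and likewise $|I_1(G)|=|I_1(H)|$. We may then redefine $\Phi$ on the isolated vertices by any bijections $I_0(G)\to I_0(H)$ and $I_1(G)\to I_1(H)$. This keeps it an isomorphism and makes it layer-preserving.

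For graphs, and in particular under the standing connectivity assumptions, no vertices are isolated and this step is vacuous. The proof is then complete.
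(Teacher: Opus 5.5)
Your proof is correct and complete. The paper does not prove this theorem itself; it quotes it from~\cite{dissertation}.

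Your forward map is exactly the identification behind the lift of Section~\ref{sec:liftfold}. The lift $\vec{G}_{\alpha,\beta}$ is literally $\ADC(H)$, with $\alpha V(G)$ as the source layer and $\beta V(G)$ as the sink layer. The relabelling $(u,0)\mapsto\alpha(u)$, $(u,1)\mapsto\beta(u)$ is your $\Phi$.

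For the converse, your degree argument shows why the directed statement is easier than its undirected analogue, Theorem~\ref{thm:cdc}. In $\CDC(G)$ an isomorphism may interchange the colour classes. When the cover is disconnected it may do so on some components and not others, so it must be corrected by switching. Compare the connectedness argument used in the proof of Theorem~\ref{prop:involution04}. In $\ADC(G)$, orientation fixes the layer of every non-isolated vertex, so the only remaining freedom is among isolated vertices. Your count $|I_\epsilon(G)|=|I_\epsilon(H)|$ and the redefinition of $\Phi$ on them handle this correctly.

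One small inaccuracy: your closing remark that graphs have no isolated vertices is not true in general. It fails for $K_1$ and for the intermediate graphs in Construction~\ref{con:pins}, which contain an added isolated vertex. Nothing depends on it, since your redistribution step already covers that case.
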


Theorem~\ref{thm:adc} extends Theorem~\ref{thm:cdc} to mixed graphs. In the
setting of undirected graphs, which we adopt for the remainder of the paper,
TF-isomorphic graphs $G$ and $H$ satisfying $\CDC(G)\cong\CDC(H)$ will be
called \emph{base graphs} with respect to their common CDC.

\section{Lifting and Guided Folding}
\label{sec:liftfold}

\subsection*{Lifting}

We introduce a construction analogous to the \emph{permutation voltage
graph} of Gross and Tucker~\cite{grosstucker01}, but with the permutations
restricted to those arising from TF-isomorphisms of the base graph. We retain
the established voltage-graph terminology and refer to the construction as a
\emph{lifting}.

Given a non-trivial TF-isomorphism $(\alpha,\beta)$ from a graph $G$ to a
graph $H$, the pair $(\alpha,\beta)$ \emph{lifts} $G$ to a digraph
$\vec{G}_{\alpha,\beta}$ with vertex set
$V(\vec{G}_{\alpha,\beta})=\alpha(V(G))\cup\beta(V(G))$ and arc set
determined by $(\alpha(u),\beta(v))\in A(\vec{G}_{\alpha,\beta})$
if and only if $(u,v)\in A(G)$.
The digraph $\vec{G}_{\alpha,\beta}$ has the same underlying structure as $\ADC(G)$,
with vertex labels encoded by $\alpha$ and~$\beta$. Where $\ADC(G)$ contains arcs
$((u,0),(v,1))$ and $((v,0),(u,1))$, the lift contains $(\alpha(u),\beta(v))$
and $(\alpha(v),\beta(u))$. Figure~\ref{fig:lift01} shows the lifting of
$G\cong C_3$.

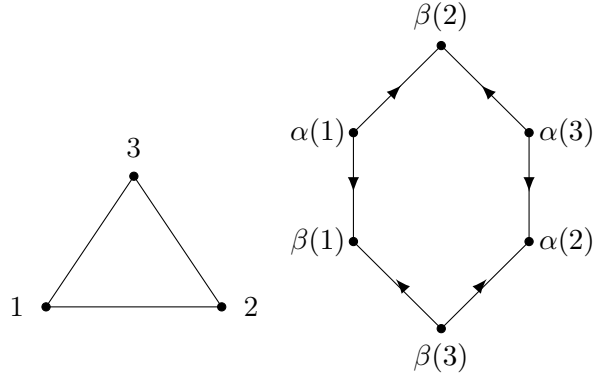
\begin{figure}[H]
\centering
\begin{tikzpicture}[node distance={15mm},thick,
    main/.style={draw,circle,inner sep=1.2pt,fill}]
\node[main](t1) at (0,0)        [label=left:{1}]{};
\node[main](t2) at (1.5,0)      [label=right:{2}]{};
\node[main](t3) at (0.75,1.299) [label=above:{3}]{};
\draw(t1)--(t2);\draw(t2)--(t3);\draw(t3)--(t1);
\node[main](1) at (4.9,2.45)     [label=above:{$\beta(2)$}]{};
\node[main](2) [below right of=1][label=right:{$\alpha(3)$}]{};
\node[main](3) [below of=2]      [label=right:{$\beta(1)$}]{};
\node[main](4) [below left of=3] [label=below:{$\alpha(2)$}]{};
\node[main](5) [above left of=4] [label=left:{$\beta(3)$}]{};
\node[main](6) [above of=5]      [label=left:{$\alpha(1)$}]{};
\draw[directed,thick](2)--(1);\draw[directed,thick](2)--(3);
\draw[directed,thick](4)--(3);\draw[directed,thick](4)--(5);
\draw[directed,thick](6)--(5);\draw[directed,thick](6)--(1);
\end{tikzpicture}
\caption{The graph $G\cong C_3$ (left) and its lift
         $\vec{G}_{\alpha,\beta}\cong\ADC(G)$ (right).}
\label{fig:lift01}
\end{figure}

If $(\alpha,\beta)$ is a TF-isomorphism from $H$ to $K$, then
$(\alpha^{-1},\beta^{-1})$ is a TF-isomorphism from $K$ to $H$ and
$\vec{K}_{\alpha,\beta}\cong\vec{H}_{\alpha^{-1},\beta^{-1}}$; in
particular, the lifts of TF-isomorphic base graphs are isomorphic, and
replacing the arcs of any lift with undirected edges recovers the common CDC.

The following two observations will be used in the constructions of Section~\ref{sec:constructions}.

\begin{Prop}\label{prop:bipartitelift}
Let $G$ be connected and let $(\alpha,\beta)$ be a TF-isomorphism from $G$ to some graph $H$. Then the lift $\vec{G}_{\alpha,\beta}$ is disconnected if and only if $G$ is bipartite.
\end{Prop}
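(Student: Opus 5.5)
The plan is to reduce the statement to the classical fact that the canonical double cover of a connected graph is connected if and only if the graph is non-bipartite. The lift $\vec{G}_{\alpha,\beta}$ is by construction isomorphic to $\ADC(G)$. Explicitly, the map $(u,0)\mapsto\alpha(u)$, $(u,1)\mapsto\beta(u)$ sends arcs of $\ADC(G)$ bijectively onto arcs of the lift: $((u,0),(v,1))$ is an arc exactly when $(u,v)\in A(G)$, exactly when $(\alpha(u),\beta(v))$ is an arc of the lift. For this map to be a bijection on vertices, the two copies $\alpha(V(G))$ and $\beta(V(G))$ must be read as disjoint (sources and sinks, as in Figure~\ref{fig:lift01}). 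I will state this reading explicitly, since the paper writes the vertex set as a union. Connectivity of a digraph here means weak connectivity, i.e.\ connectivity of the underlying graph. Replacing each arc of $\ADC(G)$ by an undirected edge gives exactly $\CDC(G)$, because every edge $\{u,v\}$ of $G$ contributes both $((u,0),(v,1))$ and $((v,0),(u,1))$. So it suffices to show that $\CDC(G)$ is disconnected if and only if $G$ is bipartite.

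For the direction ``bipartite $\Rightarrow$ disconnected'', take a bipartition $V(G)=X\cup Y$. Then the sets $(X\times\{0\})\cup(Y\times\{1\})$ and $(Y\times\{0\})\cup(X\times\{1\})$ partition $V(\CDC(G))$, and no edge joins them. This is the observation already recorded in item (a) of Section~\ref{sec:prelim}.

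For the converse, suppose $G$ is connected and non-bipartite, and fix an odd closed walk $W$ in $G$ through some vertex $w$. Take any vertex $(x,i)$ of $\CDC(G)$. A walk of length $\ell$ from $w$ to $x$ in $G$ lifts to a walk from $(w,0)$ to $(x,\ell \bmod 2)$. Prefixing it with $W$ flips the parity, so it also reaches $(x,1-\ell \bmod 2)$. Hence every vertex is reachable from $(w,0)$, and the CDC, and therefore the lift, is connected.

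The only step needing care is the first: identifying the lift with $\ADC(G)$ as digraphs. In particular one must check that a TF-isomorphism onto $H$ does not force identifications between the $\alpha$-images and the $\beta$-images. I will handle this by fixing the convention that the lift's vertices are tagged source/sink copies. After that the argument is the standard parity argument above.
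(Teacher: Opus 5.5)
Your proof is correct and follows the paper's argument. The bipartite direction is the same two-component split, $\alpha(X)\cup\beta(Y)$ versus $\alpha(Y)\cup\beta(X)$. The converse is the same odd-cycle parity argument, which you phrase as reachability in $\CDC(G)$ rather than via \textbf{A}-trails joining $\alpha(u)$ to $\beta(u)$ in the lift. Your reading of $\alpha V(G)$ and $\beta V(G)$ as disjoint source/sink copies, and your use of odd closed \emph{walks}, make explicit what the paper leaves implicit. The walk version is also the strictly correct one: taken literally, an odd closed \emph{trail} need not pass through a pendant vertex.
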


\begin{proof}
If $G$ is bipartite with colour classes $A$ and $B$, every arc of
$\vec{G}_{\alpha,\beta}$ runs from $\alpha(A)$ to $\beta(B)$ or from
$\alpha(B)$ to $\beta(A)$, so the lift splits into two components, each a
reversal of the other.

Conversely, if $\vec{G}_{\alpha,\beta}$ is disconnected, there exists
$u\in V(G)$ for which no \textbf{A}-trail joins $\alpha(u)$ to $\beta(u)$,
implying that $G$ has no odd closed trail through $u$. Since every connected
non-bipartite graph admits an odd closed trail through each of its
vertices~\cite{bondy}, $G$ must be bipartite.
\end{proof}

\begin{Prop}\label{prop:disconnectedbase}
If $G$ is disconnected, then so is $\vec{G}_{\alpha,\beta}$.
\end{Prop}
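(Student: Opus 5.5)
The plan is to exhibit an explicit partition of the vertex set of $\vec{G}_{\alpha,\beta}$ into two non-empty parts with no arc between them. This mirrors the first half of the proof of Proposition~\ref{prop:bipartitelift}, except that we now split along connected components of $G$ instead of along colour classes.

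Since $G$ is disconnected, we write $V(G)=A\cup B$ with $A,B$ non-empty, disjoint, and no edge of $G$ between $A$ and $B$. For instance, $A$ can be one connected component and $B$ the union of the others. The vertex set of the lift is $\alpha(V(G))\cup\beta(V(G))$. Following the identification with $\ADC(G)$, we treat the $\alpha$-copies and $\beta$-copies as the two distinct layers $V(G)\times\{0\}$ and $V(G)\times\{1\}$. We then set
\[
X=\alpha(A)\cup\beta(A),\qquad Y=\alpha(B)\cup\beta(B).
\]
Both sets are non-empty because $A$ and $B$ are non-empty. They are disjoint and together cover the vertex set, since $\alpha$ and $\beta$ are bijections and the two layers are kept distinct.

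Next we check that no arc crosses between $X$ and $Y$. Every arc of $\vec{G}_{\alpha,\beta}$ has the form $(\alpha(u),\beta(v))$ with $(u,v)\in A(G)$. Since $u$ and $v$ are adjacent in $G$, they lie in the same part, either both in $A$ or both in $B$. Hence every arc has both ends in $X$ or both ends in $Y$. Ignoring arc directions, the underlying graph therefore has no edge between $X$ and $Y$, so $\vec{G}_{\alpha,\beta}$ is disconnected, even weakly.

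The only delicate point is the bookkeeping of the vertex set. Read literally, $\alpha(V(G))\cup\beta(V(G))$ could identify the vertices $\alpha(u)$ and $\beta(w)$, and such an identification could in principle glue $X$ to $Y$. I would resolve this by invoking the stated convention that the lift has the same underlying structure as $\ADC(G)$, with $\alpha$-images as sources and $\beta$-images as sinks. Equivalently, one can pass to the isomorphic copy $\ADC(G)$ and note that $\ADC(G)$ is the disjoint union of $\ADC(G[A])$ and $\ADC(G[B])$. Once this identification is fixed, the argument above is immediate.
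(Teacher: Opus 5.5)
Your proof is correct and uses the same idea as the paper: every arc $(\alpha(u),\beta(v))$ of the lift comes from an arc $(u,v)$ of $G$ whose endpoints lie in one component, so the images of different components are never joined. The paper phrases this through \textbf{A}-trails, which for the lift amounts to weak connectivity, since every vertex there is a source or a sink.

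Your explicit partition $X=\alpha(A)\cup\beta(A)$, $Y=\alpha(B)\cup\beta(B)$ makes precise what the paper's short argument leaves implicit. The same is true of your insistence that $\alpha V(G)$ and $\beta V(G)$ be kept as disjoint layers, as in $\ADC(G)$. Under the literal reading $\alpha(V(G))=\beta(V(G))=V(H)$ the statement would fail; for the seed pair $(C_3\cup C_3,\,C_6)$ that reading turns the lift into $C_6$ itself.
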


\begin{proof}
Each arc of $\vec{G}_{\alpha,\beta}$ connects $\alpha(u)$ to $\beta(v)$
for an arc $(u,v)$ of $G$. If $u$ and $v$ lie in different components of
$G$, no \textbf{A}-trail can join $\alpha(u)$ to $\beta(u)$, so the lift
inherits the disconnection.
\end{proof}

\subsection*{Folding}

Recovering a graph from a lift is called \emph{folding} or
\emph{projection}~\cite{grosstucker01}. The idea is to associate each vertex
$\alpha(u)$ in one colour class of $\vec{G}_{\alpha,\beta}$ uniquely with a
vertex in the other class $\beta V(G)$ (writing $\alpha V(G)$ and
$\beta V(G)$ for $\alpha(V(G))$ and $\beta(V(G))$), then collapse each associated pair to
a single vertex, so that arcs and their reverses collapse to the same
undirected edge. The canonical identification $\alpha(u)\leftrightarrow\beta(u)$
recovers $G$; more generally, any $\gamma\in\Aut(G)$ yields the identification
$\alpha\gamma(u)\leftrightarrow\beta\gamma(u)$ and produces a graph isomorphic
to $G$. A folding is called \emph{trivial} if its output is isomorphic to~$G$.

Formally, a \emph{guide} is a bijection
$\phi:\alpha V(G)\cup\beta V(G)\to\beta V(G)\cup\alpha V(G)$ between the
two colour classes satisfying
\[
    \phi(\alpha(u))=\beta(v) \iff \phi(\beta(v))=\alpha(u),
\]
and the associated \emph{stitcher} $\hat\phi$ is the surjection sending both
$\alpha(u)$ and $\phi(\alpha(u))$ to a common vertex $\hat{u}$ for every
$u\in V(G)$. Every guide is induced by a colour-class-switching permutation
of $V(\CDC(G))$.

The following example illustrates the two distinct outcomes.

\begin{Ex}\label{ex:c6fold}
Let $G\cong C_6$ with $V(G)=\{1,2,3,4,5,6\}$ and edges $\{i,i{+}1\bmod 6\}$.
Set $\alpha=(2\ 5)$ and $\beta=(3\ 6)(4\ 1)$. Each edge $\{u,v\}$ of $G$
contributes arcs $(\alpha(u),\beta(v))$ and $(\alpha(v),\beta(u))$ to the
lift, and the twelve arcs decompose into two \textbf{A}-connected circuits:
Circuit~1 has $\alpha$-class $\{\alpha(1),\alpha(3),\alpha(5)\}$ and
$\beta$-class $\{\beta(2),\beta(4),\beta(6)\}$; Circuit~2 has the
complementary classes.  We apply two guides, using the stitcher
$\hat\phi$ of the definition above.

\textit{Trivial folding.}
Set $\phi_1(\alpha(u))=\beta(u)$ for each $u\in V(G)$. The stitcher
$\hat\phi_1$ collapses $\alpha(u)$ and $\beta(u)$ to $\hat u$.
The edge $\{u,v\}$ of $G$ lifts to the two arcs
$(\alpha(u),\beta(v))$ and $(\alpha(v),\beta(u))$, which collapse to the
mutually reverse pair $(\hat u,\hat v)$ and $(\hat v,\hat u)$, that is, to
the single undirected edge $\{\hat u,\hat v\}$. Circuit~1
contains one arc of each such pair and Circuit~2 the other, so the two
circuits together give the six edges $\{\hat 1,\hat 2\}$,
$\{\hat 2,\hat 3\}$, $\{\hat 3,\hat 4\}$, $\{\hat 4,\hat 5\}$,
$\{\hat 5,\hat 6\}$, $\{\hat 6,\hat 1\}$ of $C_6$.  Hence $H\cong G$: a
trivial folding.

\textit{Non-trivial folding.}
Write $\bar u$ for the antipodal of $u$ in $C_6$ (so $\bar 1=4$,
$\bar 2=5$, $\bar 3=6$, and vice versa), and set
$\phi_2(\alpha(u))=\beta(\bar u)$ for each $u$.  The stitcher $\hat\phi_2$
collapses $\alpha(u)$ and $\beta(\bar u)$ to $\hat u$.  Since $\beta(v)$
is collapsed with $\alpha(\bar v)$, it is sent to $\hat{\bar v}$,
so arc $(\alpha(u),\beta(v))$ becomes the arc
$(\hat u,\hat{\bar v})$, and $(\alpha(\bar v),\beta(\bar u))$ becomes its
reverse; together they are the edge $\{\hat u,\hat{\bar v}\}$. Here the two
arcs of a pair lie in the same circuit, so the six arcs of Circuit~1 collapse
in pairs to three distinct edges: $(\alpha(1),\beta(2))$ and
$(\alpha(5),\beta(4))$ both give $\{\hat 1,\hat 5\}$, and the full set
yields the triangle on $\hat 1$, $\hat 3$, $\hat 5$.  Circuit~2 similarly
yields the triangle on $\hat 2$, $\hat 4$, $\hat 6$.  Hence
$H\cong C_3\cup C_3\not\cong G$: a non-trivial folding, and $G$ and $H$
are TF-cousins.
\end{Ex}

The following three lemmas formalise the structural constraints on
non-trivial foldings; they build towards Theorem~\ref{prop:involution04},
the algebraic core of the framework.

\begin{Lem}\label{prop:involution01}
Let $G$ be a base graph, $(\alpha,\beta)$ a TF-isomorphism from $G$ to a
mixed graph $H$, $\phi$ a guide for $\vec{G}_{\alpha,\beta}$, and $H'$ the
graph obtained by applying the stitcher $\hat\phi$. Then
$H'\cong^{\mathbf{TF}}G\cong^{\mathbf{TF}}H$ if and only if $\phi$ is an
isomorphism from $\vec{G}_{\alpha,\beta}$ to its converse
$\vec{G}_{\alpha,\beta}^{\opp}$.
\end{Lem}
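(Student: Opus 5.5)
Set $L=\vec{G}_{\alpha,\beta}$. The plan is to turn the folded graph $H'$ into a statement about the arc set of $L$, and then read off both implications.

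\emph{Encoding $H'$.} The stitcher $\hat\phi$ collapses $\alpha(u)$ and $\phi(\alpha(u))$ into one vertex $\hat u$. Hence each arc $(\alpha(u),\beta(v))$ of $L$ becomes the arc $(\hat u,\hat\phi(\beta(v)))$, and the result $H'$ is in general a mixed graph. I will index the vertices of $H'$ by $u\in V(G)$, so that $\hat u$ is the class of $\alpha(u)$. Define the bijection $\sigma$ of $V(G)$ by
\[
\phi(\beta(v))=\alpha(\sigma(v)).
\]
Then $\beta(v)$ is sent to $\widehat{\sigma(v)}$, and the arc set of $H'$ is $\{(\hat u,\widehat{\sigma(v)}):(u,v)\in A(G)\}$.

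It follows that $(\mathrm{id},\sigma)$, read through $u\mapsto\hat u$, is a TF-isomorphism from $G$ onto the mixed graph $H'$. So $H'$ always satisfies $H'\cong^{\mathbf{TF}}G$ as a mixed graph. The real content of the lemma is therefore whether $H'$ is a genuine graph, i.e.\ whether every arc of $H'$ has its reverse. I will interpret the condition ``$H'\cong^{\mathbf{TF}}G\cong^{\mathbf{TF}}H$'' this way: $H'$ is a base graph, which by Theorem~\ref{thm:cdc} means it shares the CDC of $G$.

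\emph{Symmetry of $H'$ is the converse-isomorphism property.} $H'$ is undirected exactly when, for every arc $(\alpha(u),\beta(v))$ of $L$, the reversed pair $(\hat\phi(\beta(v)),\hat u)$ is also the image of an arc. That arc must be $(\alpha(\sigma(v)),\beta(w))$ with $\phi(\beta(w))=\alpha(u)$, i.e.\ $(\phi(\beta(v)),\phi(\alpha(u)))\in A(L)$. Here the guide axiom $\phi(\alpha(u))=\beta(w)\iff\phi(\beta(w))=\alpha(u)$ is what identifies $w$. So $H'$ is undirected if and only if
\[
(x,y)\in A(L)\ \Longrightarrow\ (\phi(y),\phi(x))\in A(L),
\]
which says that $\phi$ maps $A(L)$ into $A(L^{\opp})$. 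Since $\phi$ is a bijection on vertices and $L$ is finite, the arc map is injective, and equality of arc counts upgrades this to an isomorphism $L\to L^{\opp}$.

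\emph{The two directions.} For ($\Leftarrow$): if $\phi$ is an isomorphism onto the converse, the implication above holds, so $H'$ is a graph. By the first step it is TF-isomorphic to $G$, and hence to $H$, since $G\cong^{\mathbf{TF}}H$ by hypothesis. For ($\Rightarrow$): if $H'$ is a graph TF-isomorphic to $G$, then in particular $H'$ is undirected, and the implication gives that $\phi$ is an isomorphism to $L^{\opp}$.

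The main obstacle is the bookkeeping in the second step. Two points need care. First, the guide's involutive pairing condition must be shown to be exactly what converts the reverse of a collapsed arc back into an arc of $L$. Second, the possible collapsing of multiple arcs onto one edge, as in Example~\ref{ex:c6fold}, must not break the counting argument. I would handle the latter by working with the arc map of $L$ itself rather than with the arcs of $H'$.
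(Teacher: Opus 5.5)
Your proof is correct, and it takes a more explicit route than the paper's. The paper argues by a single chain of equivalences. An arc of $H'$ corresponds to an arc $(\alpha(u),\beta(v))$ of $L=\vec{G}_{\alpha,\beta}$. The converse-isomorphism property moves this to $(\phi(\alpha(u)),\phi(\beta(v)))$ in $L^{\mathrm{opp}}$, and that is tied back to $(u,v)\in A(G)$ and $(\alpha(u),\beta(v))\in A(H)$. The paper never names the TF-isomorphism from $G$ to $H'$. It also does not separate what holds for every guide from what needs the hypothesis on $\phi$.

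You instead compute the fold exactly. With $\phi(\beta(v))=\alpha(\sigma(v))$, the arcs of $H'$ are the images of $A(G)$ under $(u,v)\mapsto(\hat u,\widehat{\sigma(v)})$. So $(\mathrm{id},\sigma)$ is a TF-isomorphism onto $H'$ for \emph{every} guide. The hypothesis on $\phi$ then enters only through the symmetry of $H'$. You show this symmetry is equivalent to $\phi$ mapping $A(L)$ onto $A(L^{\mathrm{opp}})$, using the guide axiom and the count $|A(L)|=|A(L^{\mathrm{opp}})|$.

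This gives a real proof of both directions. It also shows that the lemma must be read as ``$H'$ is an undirected graph TF-isomorphic to $G$''. Read literally for mixed graphs, the left-hand side always holds, and the ``only if'' direction would be false. For instance, on $C_5$ the guide with $\sigma(v)=v+2$ folds to a loopless digraph with arcs $(\hat u,\widehat{u+1})$ and $(\hat u,\widehat{u+3})$, which is not symmetric. The paper's sketch passes over this point. Its version is shorter and closer to the lifting picture, but leaves these checks to the reader.

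One caveat applies to both proofs. In the ($\Leftarrow$) direction, $H'$ acquires a loop at $\hat u$ exactly when $\alpha(u)$ is adjacent to $\phi(\alpha(u))$. So ``graph'' there has to allow loops unless $\phi$ is strongly switching, in line with Remark~\ref{rem:pacco}.
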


\begin{proof}
An arc $(u',v')\in A(H')$ if and only if $(\alpha(u),\beta(v))\in
A(\vec{G}_{\alpha,\beta})$, which holds if and only if
$(\phi(\alpha(u)),\phi(\beta(v)))\in A(\vec{G}_{\alpha,\beta}^{\opp})$.
The claim follows from the chain of equivalences
\begin{align*}
(\phi(\alpha(u)),\phi(\beta(v)))\in A(\vec{G}_{\alpha,\beta})
&\iff(u,v)\in A(G)\\
&\iff(\alpha(u),\beta(v))\in A(H). \qedhere
\end{align*}
\end{proof}

\begin{Lem}[\cite{dissertation,lms1}]\label{prop:involution02}
If $H'$ is obtained by a non-trivial folding of the lift of a base graph $G$
via $(\alpha,\beta)$, then $(\alpha,\beta)$ is a TF-isomorphism from $G$ to
$H'$.
\end{Lem}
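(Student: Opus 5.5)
The plan is to make the labelling of $H'$ explicit and then read the TF-isomorphism directly off the definition of the lift, using Lemma~\ref{prop:involution01} to guarantee that $H'$ is an honest graph rather than a mixed graph.

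First I fix the bookkeeping. Write $V(H)$ for the common image set, so $\alpha V(G)=\beta V(G)=V(H)$ as sets, and every label $x\in V(H)$ occurs in $\vec{G}_{\alpha,\beta}$ twice: as a source copy $x^{+}$ and as a sink copy $x^{-}$. The stitcher $\hat\phi$ merges $x^{+}$ with the sink copy $\phi(x^{+})$. Each vertex $\hat w$ of $H'$ therefore contains exactly one source copy and exactly one sink copy. I use the source label to name $\hat w$. The guide $\phi$ then induces a permutation $\pi$ of $V(H)$ with $\phi(x^{+})=\pi(x)^{-}$. The condition $\phi(\alpha(u))=\beta(v)\iff\phi(\beta(v))=\alpha(u)$ says that the sink copy $y^{-}$ lies in the vertex named $\pi^{-1}(y)$.

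Next I compute the arcs of $H'$. An arc $(x^{+},y^{-})$ of the lift collapses to $(x,\pi^{-1}(y))$. By the definition of the lift, $(\alpha(u),\beta(v))$ is an arc of $\vec{G}_{\alpha,\beta}$ exactly when $(u,v)\in A(G)$. Hence
\[
(u,v)\in A(G)\iff(\alpha(u),\pi^{-1}\beta(v))\in A(H').
\]
Since $\pi^{-1}\beta$ is a bijection, this shows that $(\alpha,\pi^{-1}\beta)$ is a TF-isomorphism from $G$ onto the mixed graph $H'$. Equivalently, the map $x^{+}\mapsto(\hat x,0)$, $y^{-}\mapsto(\hat y,1)$ is an isomorphism $\vec{G}_{\alpha,\beta}\to\ADC(H')$, which is consistent with Theorem~\ref{thm:adc}. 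To see that $H'$ is undirected, I invoke Lemma~\ref{prop:involution01}. The folding produces a base graph TF-isomorphic to $G$ only when $\phi$ is an isomorphism $\vec{G}_{\alpha,\beta}\to\vec{G}_{\alpha,\beta}^{\opp}$, and under that condition each collapsed arc $(x,\pi^{-1}(y))$ comes paired with its reverse. So $H'$ is a graph and the TF-relation holds in the undirected sense.

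Finally I must match the pair $(\alpha,\pi^{-1}\beta)$ with the pair $(\alpha,\beta)$ in the statement. Under the stitcher, the vertex $\hat w$ of $H'$ is simultaneously the source copy $x^{+}$ and the sink copy $\pi(x)^{-}$. Suppose the vertex set of $H'$ is read as $V(H)$ via the source names when a vertex is the tail of an arc, and via the sink names when it is the head. This is exactly the identification $\alpha(u)\leftrightarrow\phi(\alpha(u))$ built into the stitcher. With this convention, $\pi^{-1}\beta(v)$ and $\beta(v)$ name the same vertex of $H'$, so the pair is literally $(\alpha,\beta)$.

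The main obstacle is this labelling step. I must state carefully that the stitcher's identification is what makes $\beta(v)$ a name for a vertex of $H'$. I must also check that the non-triviality hypothesis plays no role beyond ensuring $H'\not\cong G$; in the trivial case the same computation holds with $\pi=\id$ up to an automorphism. Once the convention is pinned down, the remaining equivalences are immediate from the definition of the lift.
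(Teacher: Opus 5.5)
The paper gives no proof of this lemma; it is quoted from \cite{dissertation,lms1}, so there is no route to compare against. Your argument is correct and is the natural direct verification. Its whole content is the displayed equivalence. That equivalence uses only that $\hat\phi$, restricted to each colour class, is a bijection onto $V(H')$. It uses no property of $\phi$ and, as you observe, not the non-triviality hypothesis.

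Three points need tightening.

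First, the convention in your last paragraph (source names for tails, sink names for heads) is not a labelling of $V(H')$, since every vertex of positive degree is both a tail and a head. Say instead that the statement's $(\alpha,\beta)$ can only mean the composites $u\mapsto\hat\phi(\alpha(u))$, with $\alpha(u)$ taken in the source class, and $v\mapsto\hat\phi(\beta(v))$, with $\beta(v)$ taken in the sink class. In source names these composites are $\alpha$ and $\pi^{-1}\beta$. Under a single fixed naming of $V(H')$ by $V(H)$, the pair is literally $(\alpha,\beta)$ only when $\pi=\id$, that is, for the fold that returns $H$ itself. This is exactly what happens for $\phi_2$ in Example~\ref{ex:c6fold}, where $\alpha(u)=\beta(\bar u)$ as labels.

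Second, your appeal to Lemma~\ref{prop:involution01} for undirectedness is circular. You have just shown that $H'\cong^{\mathbf{TF}}G$ as mixed graphs for every guide, so that lemma can constrain $\phi$ only if $H'$ is already assumed to be a graph. The direct fact is this: the reverse $(\pi^{-1}(y),x)$ of a collapsed arc lies in $A(H')$ exactly when $(\phi(y^{-}),\phi(x^{+}))$ is an arc of $\vec{G}_{\alpha,\beta}$. Hence $H'$ is undirected if and only if $\phi$ carries $A(\vec{G}_{\alpha,\beta})$ onto $A(\vec{G}_{\alpha,\beta}^{\mathrm{opp}})$. This condition must be part of what ``folding'' presupposes, as the paper tacitly assumes in Lemma~\ref{prop:involution03}. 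It is not needed for the TF-condition itself, which is stated arc by arc.

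Third, your aside on the trivial case is wrong. The canonical trivial folding $\alpha(u)\leftrightarrow\beta(u)$ has $\pi=\beta\alpha^{-1}$, not $\pi=\id$; as noted above, $\pi=\id$ is the fold giving $H$. This does no damage, because your computation never uses any property of $\pi$.
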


\begin{Lem}\label{prop:involution03}
Let $G$ be a base graph and $\phi$ a guide for $\vec{G}_{\alpha,\beta}$.
Then $\phi$ is a switching involution of $\Aut(\CDC(G))$.
\end{Lem}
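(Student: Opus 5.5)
We must show that a guide $\phi$ for $\vec{G}_{\alpha,\beta}$ is a switching involution of $\Aut(\CDC(G))$. The plan is to transport $\phi$ to $V(\CDC(G))$ and check three properties there: it is an involution, it swaps the colour classes, and it preserves adjacency.

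First we fix the identification. The lift $\vec{G}_{\alpha,\beta}$ is isomorphic to $\ADC(G)$ via $\alpha(u)\mapsto(u,0)$ and $\beta(v)\mapsto(v,1)$. Forgetting arc directions turns $\ADC(G)$ into $\CDC(G)$, as remarked after the definition of lifting. Under this identification, $\alpha V(G)$ corresponds to $V_0$ and $\beta V(G)$ to $V_1$. We regard $\phi$ as the permutation $\tilde\phi$ of $V(\CDC(G))$ obtained by conjugating with this bijection; this is the sense of the remark that every guide is induced by a colour-class-switching permutation.

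The first two properties come straight from the definition of a guide. Because $\phi$ maps $\alpha V(G)$ to $\beta V(G)$ and back, $\tilde\phi$ interchanges $V_0$ and $V_1$. The defining condition $\phi(\alpha(u))=\beta(v)\iff\phi(\beta(v))=\alpha(u)$ says exactly that $\phi^2$ fixes every vertex of both classes, so $\tilde\phi^2=\id$.

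The third property, that $\tilde\phi$ preserves adjacency in $\CDC(G)$, is the substantive step. The definition of a guide by itself does not force this. We therefore use the standing assumption that the folding yields a graph TF-isomorphic to $G$, which is the setting in which $G$ is called a base graph and in which Lemma~\ref{prop:involution02} applies. Lemma~\ref{prop:involution01} then gives that $\phi$ is an isomorphism from $\vec{G}_{\alpha,\beta}$ to $\vec{G}_{\alpha,\beta}^{\opp}$. So $(\alpha(u),\beta(v))$ is an arc if and only if $(\phi(\alpha(u)),\phi(\beta(v)))$ is an arc of the converse, i.e.\ if and only if $(\phi(\beta(v)),\phi(\alpha(u)))$ is an arc of $\vec{G}_{\alpha,\beta}$.

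Forgetting directions, this says $\{x,y\}\in E(\CDC(G))$ if and only if $\{\tilde\phi(x),\tilde\phi(y)\}\in E(\CDC(G))$, so $\tilde\phi\in\Aut(\CDC(G))$. The reversal of arcs matches the fact that $\tilde\phi$ swaps the source class with the sink class, so the computation is consistent. Combined with the first two properties, $\phi$ is a switching involution.

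The main obstacle is justifying that adjacency preservation is actually available. I would state explicitly that "guide" is understood in the context of Lemma~\ref{prop:involution01}, where the folded graph is TF-isomorphic to $G$. Without that hypothesis, an arbitrary class-swapping involution need not be an automorphism, and the claim would fail.
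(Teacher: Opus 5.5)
Your proof is correct and follows the paper's: the guide condition gives $\phi^2=\id$ and the interchange of colour classes, and adjacency is preserved because $\phi$ carries arcs of $\vec{G}_{\alpha,\beta}$ to arcs of $\vec{G}_{\alpha,\beta}^{\opp}$ (Lemma~\ref{prop:involution01}), and these two arc sets together make up $A(\CDC(G))$. You are right that this last property does not follow from the bare definition of a guide (the paper uses it without comment); its real content is that the stitched result is undirected, since the arcs of the fold pair off with their reverses exactly when $\phi\in\Aut(\CDC(G))$.
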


\begin{proof}
For any $\alpha(u)\in\alpha V(G)$ there is $\beta(v)\in\beta V(G)$ with
$\phi^2(\alpha(u))=\phi(\beta(v))=\alpha(u)$, and similarly
$\phi^2(\beta(v))=\beta(v)$. Hence $\phi$ is an involution swapping the two
colour classes. Since $\phi$ maps arcs of $\vec{G}_{\alpha,\beta}$ to arcs of
$\vec{G}_{\alpha,\beta}^{\opp}$ and
$A(\vec{G}_{\alpha,\beta}^{\opp})\cup A(\vec{G}_{\alpha,\beta})=A(\CDC(G))$,
$\phi$ is a switching involution of $\CDC(G)$.
\end{proof}

\begin{Thm}\label{prop:involution04}
Let $\phi$ and $\phi'$ be guides for $\CDC(G)$, folding it into base graphs
$H$ and $H'$ respectively. Then $H\cong H'$ if and only if $\phi$ and
$\phi'$ are conjugate in $\Aut(\CDC(G))$. Consequently distinct conjugacy
classes of guides yield non-isomorphic base graphs sharing the same
canonical double cover.
\end{Thm}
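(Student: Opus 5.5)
The plan is to identify a folded base graph with the quotient of $\CDC(G)$ by the involution $\phi$, and then to translate isomorphisms of quotients into conjugacy in $\Aut(\CDC(G))$. By Lemma~\ref{prop:involution03} every guide $\phi$ is a switching involution of $X=\CDC(G)$. The stitcher $\hat\phi$ identifies each vertex $x$ with $\phi(x)$. Under this identification the folded graph $H$ becomes the graph $X/\phi$ whose vertices are the $\langle\phi\rangle$-orbits $\{x,\phi(x)\}$, with two orbits adjacent exactly when some edge of $X$ joins them. Because $\phi$ is switching and fixes no vertex, each orbit meets $V_0$ in exactly one point. So I would first record a concrete model: $V(H)\leftrightarrow V_0$ via $x\mapsto\{x,\phi(x)\}$, with $x\sim y$ in $H$ if and only if $x\sim\phi(y)$ in $X$.

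For the direction ``conjugate $\Rightarrow$ isomorphic'', suppose $\phi'=\psi\phi\psi^{-1}$ with $\psi\in\Aut(X)$. Then $\psi$ maps $\langle\phi\rangle$-orbits onto $\langle\phi'\rangle$-orbits, since $\psi\{x,\phi x\}=\{\psi x,\phi'\psi x\}$. It also preserves adjacency between orbits, because it preserves edges of $X$. Hence $\psi$ induces an isomorphism $H\to H'$; this step is routine.

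The converse, ``isomorphic $\Rightarrow$ conjugate'', is the real content. Given an isomorphism $\sigma:H\to H'$, I would build $\psi$ on $X$ in two pieces. On $V_0$, set $\psi(x)=x'$, where $x'\in V_0$ represents the orbit $\sigma(\{x,\phi x\})$. On $V_1$, set $\psi(\phi(x))=\phi'(\psi(x))$. By construction $\psi\phi=\phi'\psi$ holds on $V_0$, and it holds on $V_1$ because both $\phi$ and $\phi'$ are involutions.

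It remains to check that $\psi$ is an automorphism of $X$. An edge $x\sim\phi(y)$ of $X$, with $x,y\in V_0$, is exactly an edge $\hat x\hat y$ of $H$ in the model above. Under $\sigma$ this edge goes to an edge of $H'$, which in turn corresponds to the edge $\psi(x)\sim\phi'(\psi(y))=\psi(\phi(y))$ of $X$. This argument needs $\sigma$ to carry the specific bipartite-double-cover structure of $X$ over to $X$ itself. The worry is that, without that, $H\cong H'$ only gives $\CDC(H)\cong\CDC(H')$ abstractly. Here the identification $V(H)\leftrightarrow V_0$ together with $x\sim y\iff x\sim\phi(y)$ shows that $X\cong\CDC(H)$ canonically, with $\phi$ corresponding to the swap $(u,0)\leftrightarrow(u,1)$. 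The same holds for $\phi'$ and $H'$. So each of $\phi$ and $\phi'$ is the image of the canonical swap under some isomorphism $\CDC(H)\to X$ or $\CDC(H')\to X$. The isomorphism $\sigma\times\id_{\ZZ_2}$ from $\CDC(H)$ to $\CDC(H')$ commutes with the swaps. Composing these three isomorphisms gives an element of $\Aut(X)$ conjugating $\phi$ to $\phi'$.

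The step I expect to need the most care is making precise that ``folding via $\phi$ yields $H$'' is equivalent to $(X,\phi)\cong(\CDC(H),\text{swap})$. Here I would invoke that guides are fixed-point-free and switching (Lemma~\ref{prop:involution03}), and, if loops must be excluded, the strongly switching condition, so that the quotient is a simple graph. The final sentence of the theorem then follows immediately: by Theorem~\ref{thm:cdc}, or directly from the model, every folded graph has $\CDC\cong X$, and non-conjugate guides give non-isomorphic graphs.
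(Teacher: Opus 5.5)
Your proposal is correct and follows the paper's proof, which is Pacco and Scapellato's. It uses the same model of the fold on $V_0$, with $x\sim y$ if and only if $x\sim\phi(y)$, and for the converse the same conjugator: $\sigma$ on $V_0$ and $\phi'\sigma\phi$ on $V_1$. Your repackaging through $X\cong\CDC(H)$, with $\phi$ corresponding to the swap, produces exactly this map, so the ``worry'' you raise is already settled by your direct edge check.

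The only difference is in the forward direction. Because you let $\psi$ act on $\langle\phi\rangle$-orbits, you avoid the paper's step of replacing $\psi$ by $\phi'\psi$ when $\psi$ interchanges the colour classes. You therefore also avoid its appeal to the connectedness of $\CDC(G)$.
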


\begin{proof}
Since a guide interchanges the colour classes $V_0$ and $V_1$, for each
$x\in V(\CDC(G))$ exactly one of $x$ and $\phi(x)$ lies in $V_0$, so the
graph obtained by applying
the stitcher $\hat\phi$ may be taken to have vertex set $V_0$, with $u$ and
$v$ adjacent precisely when $\{u,\phi(v)\}\in E(\CDC(G))$, a condition
symmetric in $u$ and $v$ because $\phi$ is an automorphism with
$\phi^2=\id$. Write $H_\phi$ for this graph, so that $H=H_\phi$. Similarly,
$H'=H_{\phi'}$.

Suppose first that $\psi\phi=\phi'\psi$ for some $\psi\in\Aut(\CDC(G))$.
Since $G$ is connected and non-bipartite, $\CDC(G)$ is connected, so its
bipartition is unique and every automorphism of $\CDC(G)$ either preserves
the colour classes or interchanges them. Set $\rho=\psi$ if $\psi$ preserves
the colour classes, and $\rho=\phi'\psi$ otherwise. In the second case $\rho$
also conjugates $\phi$ to $\phi'$, since
$(\phi'\psi)\phi=\phi'(\psi\phi)=\phi'(\phi'\psi)$, and it preserves the
colour classes because $\phi'$ and $\psi$ both interchange them. Either way,
$\rho\phi=\phi'\rho$ and $\rho$ restricts to a permutation of $V_0$. For $u,v\in V_0$,
\begin{align*}
  \{u,\phi(v)\}\in E(\CDC(G))
  &\iff\{\rho(u),\rho\phi(v)\}\in E(\CDC(G))\\
  &\iff\{\rho(u),\phi'\rho(v)\}\in E(\CDC(G)),
\end{align*}
so the restriction of $\rho$ to $V_0$ is an isomorphism from $H_\phi$ to
$H_{\phi'}$.

Conversely, let $g:H_\phi\to H_{\phi'}$ be an isomorphism and define
$\rho$ on $V(\CDC(G))$ by $\rho(x)=g(x)$ for $x\in V_0$ and
$\rho(x)=\phi'g\phi(x)$ for $x\in V_1$. Then $\rho$ is a permutation of
$V(\CDC(G))$ preserving the colour classes. Let $\{u,y\}$ be a pair with $u\in V_0$ and
$y\in V_1$. Since $\phi(y)\in V_0$ and $\phi^2=\id$, we have
$\{u,y\}\in E(\CDC(G))$ if and only if $u$ and $\phi(y)$ are adjacent in
$H_\phi$, hence if and only if $g(u)$ and $g\phi(y)$ are adjacent in
$H_{\phi'}$, that is, if and only if
$\{g(u),\phi'g\phi(y)\}=\{\rho(u),\rho(y)\}\in E(\CDC(G))$. Thus
$\rho\in\Aut(\CDC(G))$. Finally $\rho\phi=\phi'\rho$, since for every
$x\in V_0$ both sides equal $\phi'g(x)$ and for every $x\in V_1$ both equal
$g\phi(x)$,
using $\phi^2=\id$ in the first case and $\phi'^2=\id$ in the second.
Hence $\phi$ and $\phi'$ are conjugate.
\end{proof}

\begin{Rem}\label{rem:pacco}
Theorem~\ref{prop:involution04} is Theorem~3.5 of Pacco and
Scapellato~\cite{pacco} expressed in the language of guides, and the proof
above follows theirs. Because every guide arises as an isomorphism between
alternating double covers, it is \emph{strongly} switching in
$\Aut(\CDC(G))$ when the base graph is loopless, so the correspondence
restricts to one between conjugacy classes of strongly switching
involutions and isomorphism classes of loopless graphs sharing a given CDC.
What the present framework adds is not the correspondence itself but an
explicit construction: each guide is a folding of the lift,
so that a base graph is produced from an involution rather than only
counted.
\end{Rem}

\begin{Rem}\label{rem:bychawski}
Bychawski~\cite{bychawski} develops a general theory of the automorphism
groups of canonical double covers. He shows, among other things, that the
canonical double cover of an asymmetric graph has abelian automorphism group
of odd order, and that every semidirect product $H\rtimes\ZZ_2$ arises as
$\Aut(\CDC(G))$ for some connected, non-bipartite, vertex-determining $G$.
He also constructs graphs realising any prescribed number and type of
TF-isomorphic companions. His Theorem~8.6 gives a bijection
between the conjugacy classes of strongly switching elements of
$\Aut(\CDC(G))$ and the non-isomorphic graphs TF-isomorphic to
$G$, for $G$ connected, non-bipartite and vertex-determining
(Bychawski's \emph{reduced} graphs), which is the setting adopted here. That bijection coincides with
Theorem~\ref{prop:involution04}, which he obtains independently by
group-theoretic means, and goes further: the cardinality of each such
conjugacy class is shown to equal the index of instability of the
corresponding base graph. The Desargues graph illustrates both statements.
Its automorphism group has order $240$ and contains eleven strongly
switching involutions, which fall into two conjugacy classes, one of size
$1$ and the other of size~$10$. The class of size~$1$ corresponds to the Petersen graph, which is
stable. The class of size~$10$ corresponds to its TF-cousin, the other cubic
graph on ten vertices also having the Desargues graph as canonical double
cover, whose index of instability is therefore~$10$. That pair, first identified by Porcu~\cite{porcu}, appears
below as $(\CG(1),\CG'(1))$ in Section~\ref{sec:claw}. Bychawski's
Corollary~3.5 records the stability of the Petersen graph. That it
nonetheless has a TF-cousin shows that having one does not by itself make a
graph unstable.

\end{Rem}

\begin{Rem}\label{rem:hammack}
The results of Imrich and Pisanski~\cite{imrichpisanski2008} and
Abay-Asmerom et al.~\cite{abayasmerom2010} on Kronecker covers of bipartite
graphs also fall within the present framework.
The key objects in both papers are \emph{bipartition-reversing involutions}
of $G$, that is, involutions $\alpha\in\Aut(G)$ that interchange the two
colour classes; these are called \emph{polarities} in~\cite{imrichpisanski2008}.
When $G$ is bipartite, $\CDC(G)\cong G\cup G$, and a bipartition-reversing
involution of $G$ is precisely a strongly switching involution of
$\Aut(\CDC(G))$. Theorem~\ref{prop:involution04} then recovers the main
result of~\cite{imrichpisanski2008} (Proposition~3): the number of
non-isomorphic graphs $H$ with $G\cong H\times K_2$ equals the number of
conjugacy classes of polarities in $\Aut(G)$, which is Theorem~1
of~\cite{abayasmerom2010}.
\end{Rem}

\section{Constructing TF-Isomorphic Pairs and Unstable Graphs}
\label{sec:constructions}

In every known example of an unstable asymmetric graph or a pair of
non-isomorphic graphs with the same CDC, one can identify at least one pair
of odd circuits $C_k$ alongside a circuit $C_{2k}$ of twice the length. We
hereafter take $k$ to be an odd integer unless otherwise stated.

This pattern suggests the following strategy: for odd $k$, begin
with the disconnected graph $G_0\cong C_k\cup C_k$ paired with
$H_0\cong C_{2k}$, and progressively
extend this \emph{seed pair} by adding edges and vertices in a controlled
way. Since $G_0$ and $H_0$ share a CDC (they are TF-isomorphic), any extension
that respects the underlying TF-isomorphism will produce connected
TF-isomorphic pairs, yielding either TF-cousins or unstable graphs depending
on which type of extension is applied. Note that the lift of the seed pair
is disconnected, as expected from Proposition~\ref{prop:disconnectedbase},
since $G_0\cong C_k\cup C_k$ is itself disconnected.

We work throughout the case $k=3$ for concreteness.
Figure~\ref{fig:genesis01} shows a TF-isomorphism from $G_0\cong C_3\cup C_3$
to $H_0\cong C_6$, and Figure~\ref{fig:genesis02} shows the corresponding lift
$\vec{G_0}_{\alpha,\beta}$. The lift consists of two identical directed
6-circuits; any automorphism of one circuit that fixes arc orientations gives
a valid reassociation of the two components, and every such reassociation
corresponds to a strongly switching involution of $\CDC(G_0)$ in the same
conjugacy class in $\Aut(\CDC(G_0))$. Hence all such choices are equivalent and we may fix any one.

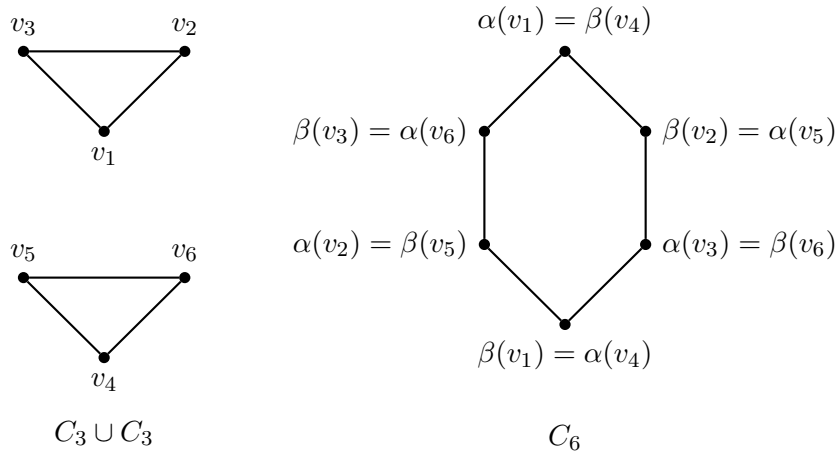
\begin{figure}[H]
\begin{center}
{
\begin{tikzpicture}[node distance={15mm},thick,above,
    main/.style={draw,circle,inner sep=1.2pt,fill},scale=0.4]
\node[main](1)[label=below:$v_1$]{};
\node[main](2)[above right of=1][label=$v_2$]{};
\node[main](3)[above left  of=1][label=$v_3$]{};
\node[main](4)[node distance=30mm,below of=1][label=below:$v_4$]{};
\node[main](5)[above left  of=4][label=$v_5$]{};
\node[main](6)[above right of=4][label=$v_6$]{};
\node[node distance=40mm,below of=1](20){$C_3\cup C_3$};
\draw(1)--(2);\draw(2)--(3);\draw(3)--(1);
\draw(4)--(5);\draw(5)--(6);\draw(6)--(4);
\node[main](7) [node distance=50mm,right of=2]
    [label={above:$\alpha(v_1)=\beta(v_4)$}]{};
\node[main](8) [below right of=7]
    [label={right:$\beta(v_2)=\alpha(v_5)$}]{};
\node[main](9) [below  of=8]
    [label={right:$\alpha(v_3)=\beta(v_6)$}]{};
\node[main](10)[below left of=9]
    [label={below:$\beta(v_1)=\alpha(v_4)$}]{};
\node[main](11)[above left of=10]
    [label={left:$\alpha(v_2)=\beta(v_5)$}]{};
\node[main](12)[above of=11]
    [label={left:$\beta(v_3)=\alpha(v_6)$}]{};
\draw(8)--(7);\draw(8)--(9);\draw(10)--(9);
\draw(10)--(11);\draw(12)--(11);\draw(12)--(7);
\node[node distance=15mm,below of=10]{$C_6$};
\end{tikzpicture}
}
\caption{A TF-isomorphism from $C_3\cup C_3$ to $C_6$.}
\label{fig:genesis01}
\end{center}
\end{figure}

\begin{figure}[H]
\begin{center}
{
\begin{tikzpicture}[node distance={15mm},thick,above,
    main/.style={draw,circle,inner sep=1.2pt,fill},scale=0.5]
\node[main](1) [label=above:$\alpha(v_1)$]{};
\node[main](2) [below right of=1][label=right:$\beta(v_2)$]{};
\node[main](3) [below  of=2]     [label=right:$\alpha(v_3)$]{};
\node[main](4) [below left of=3] [label=below:$\beta(v_1)$]{};
\node[main](5) [above left of=4] [label=left:$\alpha(v_2)$]{};
\node[main](6) [above of=5]      [label=left:$\beta(v_3)$]{};
\draw[directed,thick](1)--(2);\draw[directed,thick](3)--(2);
\draw[directed,thick](3)--(4);\draw[directed,thick](5)--(4);
\draw[directed,thick](5)--(6);\draw[directed,thick](1)--(6);
\node[main](7) [node distance=66mm,right of=1][label=above:$\beta(v_4)$]{};
\node[main](8) [below right of=7][label=right:$\alpha(v_5)$]{};
\node[main](9) [below  of=8]     [label=right:$\beta(v_6)$]{};
\node[main](10)[below left of=9] [label=below:$\alpha(v_4)$]{};
\node[main](11)[above left of=10][label=left:$\beta(v_5)$]{};
\node[main](12)[above of=11]     [label=left:$\alpha(v_6)$]{};
\draw[directed,thick](8)--(7);\draw[directed,thick](8)--(9);
\draw[directed,thick](10)--(9);\draw[directed,thick](10)--(11);
\draw[directed,thick](12)--(11);\draw[directed,thick](12)--(7);
\end{tikzpicture}
}
\caption{The lift $\vec{G_0}_{\alpha,\beta}$ corresponding to
         Figure~\ref{fig:genesis01}.}
\label{fig:genesis02}
\end{center}
\end{figure}
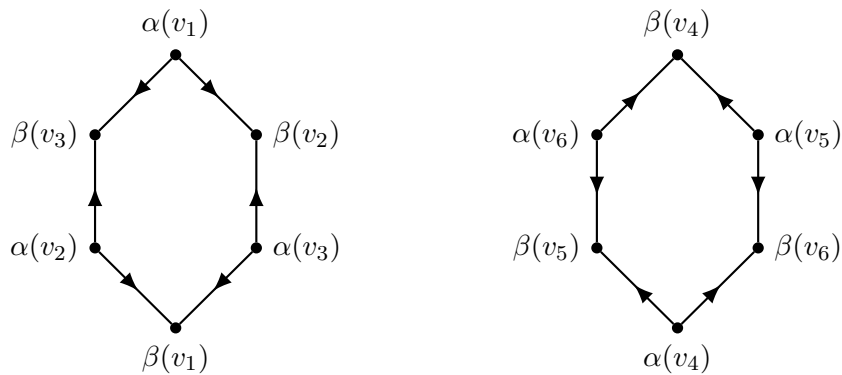

\subsection*{Extending a seed pair}

The following terminology captures the two structural roles a vertex can
play relative to a TF-isomorphism.

\begin{Def}\label{def:pinentangle}
Let $(\alpha,\beta)$ be a TF-isomorphism. A vertex $x$ is a \emph{pin} if
$\alpha(x)=\beta(x)$. Two distinct vertices $x$, $y$ form an \emph{entangled
pair} if $\alpha(x)=\beta(y)$ and $\alpha(y)=\beta(x)$.
\end{Def}

In the seed pair above, the vertices $v_1,v_4$ form an entangled pair (as
do $v_2,v_5$ and $v_3,v_6$), since $\alpha(v_i)=\beta(v_{i+3})$ and
$\alpha(v_{i+3})=\beta(v_i)$ for $i=1,2,3$.

The seed pair may be extended in three ways, each producing a different
kind of output. The first two constructions add edges or vertices directly
to the pair; the third operates on any existing TF-isomorphic pair and
generates new ones by a substitution mechanism.

\begin{Con}[Entangled and split-image edges]\label{con:entangled}
Since $G_0$ is disconnected, at least one edge must be added to connect it.
Whenever an edge $\{v_i,v_j\}$ is added to $G_0$, its image under $(\alpha,\beta)$
(namely the edge $\{\alpha(v_i),\beta(v_j)\}$ in $H_0$) must be added
simultaneously to preserve TF-isomorphism. The structure of the TF-isomorphism
then determines two types of edges:
\begin{itemize}
\item \emph{Entangled edges}: edges $\{v_i,v_j\}$ for which $v_i$, $v_j$ form
  an entangled pair. Such an edge cannot be added. Since
  $\alpha(v_i)=\beta(v_j)$, the arc $(v_i,v_j)$ maps to the loop
  $(\alpha(v_i),\alpha(v_i))$ at a single vertex of $H_0$, and $(v_j,v_i)$
  maps to a loop in the same way. An entangled pair is precisely a pair of
  vertices identified by the guide, so joining them by an edge creates the
  adjacency $u\sim\phi(u)$ excluded in Definition~\ref{def:switching}: the
  guide remains switching but ceases to be \emph{strongly} switching, and
  the folded graph acquires a loop. This is the correspondence between loops
  and switching involutions that fail to be strongly switching, recorded in
  Proposition~2.4 of~\cite{pacco}.
\item \emph{Split-image edges}: edges whose image consists of two directed
  arcs that are not self-paired. These must be introduced in complementary
  pairs; doing so produces a pair of \emph{isomorphic} graphs, each
  admitting a non-trivial TF-automorphism and hence unstable~\cite{Ars}.
\end{itemize}

Only split-image edges are therefore available, and adding
complementary pairs of them produces the unstable graphs in
Figure~\ref{fig:genesis04}. Since entangled edges are excluded, the seed
pair cannot be connected up at order~$6$ by adding edges alone, and indeed
no TF-cousin pair of connected graphs on six vertices exists
(Section~\ref{sec:conclusion}). The smallest connected TF-cousin pairs
occur at order~$7$ and are obtained by adding a pin
(Construction~\ref{con:pins}); there are exactly three of them, shown in
Figure~\ref{fig:sevenvertex}, the first being the pair of
Figure~\ref{fig:firstexample}.
\end{Con}

\begin{figure}[H]
\begin{center}
\scalebox{0.6}{
\begin{tikzpicture}[node distance={13mm},thick,above,
    main/.style={draw,circle,inner sep=2pt,fill}]
\node[main](1){};\node[main](2)[above right of=1]{};
\node[main](3)[above left of=1]{};\node[main](4)[node distance=26mm,below of=1]{};
\node[main](5)[above left of=4]{};\node[main](6)[above right of=4]{};
\draw(1)--(2);\draw(2)--(3);\draw(3)--(1);
\draw(4)--(5);\draw(5)--(6);\draw(6)--(4);\draw(3)--(5);\draw(2)--(6);
\node[main](7)[node distance=20mm,right of=2]{};
\node[main](8)[below right of=7]{};\node[main](9)[below of=8]{};
\node[main](10)[below left of=9]{};\node[main](11)[above left of=10]{};
\node[main](12)[above of=11]{};
\draw(8)--(7);\draw(8)--(9);\draw(10)--(9);
\draw(10)--(11);\draw(12)--(11);\draw(12)--(7);\draw(8)--(12);\draw(9)--(11);
\node[node distance=40mm,below of=1](25){\hspace{29mm}(a)};
\node[main](1b)[node distance=70mm,right of=1]{};
\node[main](2b)[above right of=1b]{};\node[main](3b)[above left of=1b]{};
\node[main](4b)[node distance=26mm,below of=1b]{};
\node[main](5b)[above left of=4b]{};\node[main](6b)[above right of=4b]{};
\draw(1b)--(2b);\draw(2b)--(3b);\draw(3b)--(1b);
\draw(4b)--(5b);\draw(5b)--(6b);\draw(6b)--(4b);
\draw(1b)--(6b);\draw(4b)--(3b);\draw(3b)--(5b);\draw(2b)--(6b);
\node[main](7b)[node distance=20mm,right of=2b]{};
\node[main](8b)[below right of=7b]{};\node[main](9b)[below of=8b]{};
\node[main](10b)[below left of=9b]{};\node[main](11b)[above left of=10b]{};
\node[main](12b)[above of=11b]{};
\draw(8b)--(7b);\draw(8b)--(9b);\draw(10b)--(9b);
\draw(10b)--(11b);\draw(12b)--(11b);\draw(12b)--(7b);
\draw(7b)--(9b);\draw(12b)--(10b);\draw(8b)--(12b);\draw(9b)--(11b);
\node[node distance=70mm,right of=25](26){(b)};
\node[main](1a)[node distance=140mm,right of=1]{};
\node[main](2a)[above right of=1a]{};\node[main](3a)[above left of=1a]{};
\node[main](4a)[node distance=26mm,below of=1a]{};
\node[main](5a)[above left of=4a]{};\node[main](6a)[above right of=4a]{};
\draw(1a)--(2a);\draw(2a)--(3a);\draw(3a)--(1a);
\draw(4a)--(5a);\draw(5a)--(6a);\draw(6a)--(4a);
\draw(1a)--(5a);\draw(2a)--(4a);\draw(1a)--(6a);
\draw(3a)--(4a);\draw(3a)--(5a);\draw(2a)--(6a);
\node[main](7a)[node distance=20mm,right of=2a]{};
\node[main](8a)[below right of=7a]{};\node[main](9a)[below of=8a]{};
\node[main](10a)[below left of=9a]{};\node[main](11a)[above left of=10a]{};
\node[main](12a)[above of=11a]{};
\draw(8a)--(7a);\draw(8a)--(9a);\draw(10a)--(9a);
\draw(10a)--(11a);\draw(12a)--(11a);\draw(12a)--(7a);
\draw(7a)--(11a);\draw(8a)--(10a);\draw(8a)--(12a);
\draw(9a)--(11a);\draw(7a)--(9a);\draw(12a)--(10a);
\node[node distance=85mm,right of=26]{(c)};
\end{tikzpicture}
}
\caption{Pairs of isomorphic unstable graphs from the same seed
         pair, by adding one, two, or three complementary pairs of
         split-image edges.}
\label{fig:genesis04}
\end{center}
\end{figure}

\begin{figure}[H]
\begin{center}
\scalebox{0.62}{
\begin{tikzpicture}[thick,main/.style={draw,circle,inner sep=2pt,fill}]
\foreach \lab/\dx in {a/0, b/6.4, c/12.8}{
  \begin{scope}[xshift=\dx cm]
    \foreach \i/\ang in {0/90,1/30,2/-30,3/-90,4/-150,5/150}
      \node[main](G\lab\i) at (\ang:1.5){};
    \foreach \i/\j in {0/1,1/2,2/3,3/4,4/5,5/0} \draw(G\lab\i)--(G\lab\j);
    \node[main](P\lab) at (0,0){};
    \begin{scope}[yshift=-4.9cm]
      \begin{scope}[xshift=-2cm]
        \foreach \i/\ang in {0/90,1/200,2/340} \node[main](A\lab\i) at (\ang:0.85){};
        \draw(A\lab0)--(A\lab1)--(A\lab2)--(A\lab0);
      \end{scope}
      \begin{scope}[xshift=2cm]
        \foreach \i/\ang in {0/90,1/340,2/200} \node[main](B\lab\i) at (\ang:0.85){};
        \draw(B\lab0)--(B\lab1)--(B\lab2)--(B\lab0);
      \end{scope}
      \node[main](Q\lab) at (0,0){};
    \end{scope}
    \node at (0,-7.2){(\lab)};
  \end{scope}
}
\foreach \i in {0,3} \draw(Pa)--(Ga\i);
\draw(Qa)--(Aa0);\draw(Qa)--(Ba0);
\foreach \i in {0,3,1,4} \draw(Pb)--(Gb\i);
\draw(Qb)--(Ab0);\draw(Qb)--(Bb0);
\draw(Qb) to[bend right=12](Ab1); \draw(Qb) to[bend left=12](Bb1);
\foreach \i in {0,1,2,3,4,5} \draw(Pc)--(Gc\i);
\draw(Qc)--(Ac0);\draw(Qc)--(Bc0);
\draw(Qc) to[bend right=12](Ac1); \draw(Qc) to[bend left=12](Bc1);
\draw(Qc) to[bend left=12](Ac2);  \draw(Qc) to[bend right=12](Bc2);
\end{tikzpicture}
}
\caption{The three pairs of TF-cousins on seven vertices, obtained from the
         seed pair by adding a pin joined to one, two, or three entangled
         pairs. These are the smallest connected TF-cousin pairs, and up to
         isomorphism they are the only ones on seven vertices. In each pair
         the upper graph contains $C_6$ and the lower contains
         $C_3\cup C_3$.}
\label{fig:sevenvertex}
\end{center}
\end{figure}

\begin{Con}[Adding vertices via pins]\label{con:pins}
The construction extends naturally to larger vertex sets. Adding an isolated
vertex $x$ to $G_0$ and an isolated vertex $y$ to $H_0$, and extending
$(\alpha,\beta)$ by $\alpha(x)=\beta(x)=y$, makes $x$ a pin. Connecting
$x$ to an entangled pair $v_i$, $v_j$ in $G_0$ by edges $\{v_i,x\}$ and
$\{v_j,x\}$, and adding the corresponding images to $H_0$, produces a
TF-isomorphic pair of order~$7$. The argument extends to pins of arbitrary
degree and to chains of pins, giving TF-isomorphic pairs of any order.
\end{Con}

The following result produces new TF-isomorphic pairs from old ones by a
different mechanism: replacing entangled vertices by odd circuits.

\begin{Prop}\label{prop:substitution}
Let $(\alpha,\beta)$ be a non-trivial TF-isomorphism from a graph $G$ to a
graph $H$, and let $u$, $v$ be an entangled pair in $G$, each of odd
degree~$k$. Label the neighbours of $u$ as $1,\dots,k$ and those of $v$ as
$k+1,\dots,2k$. Form $G'$ by replacing $u$ with an odd circuit $C_k$
on vertices $u_1,\dots,u_k$, attaching $u_i$ to neighbour $i$ for each $i$,
and similarly replacing $v$ with $C_k'$ on vertices $u_{k+1},\dots,u_{2k}$,
attaching $u_{k+i}$ to neighbour $k+i$. Form $H'$ from $H$ by deleting
$\alpha(u)$ and $\alpha(v)$ and introducing the even circuit $C_{2k}$ whose
vertices alternate between $\alpha(u_i)$ and $\beta(u_i)$, extending
$\alpha$ and $\beta$ by $\alpha(u_i)=\beta(u_{i+k})$ and
$\beta(u_i)=\alpha(u_{i+k})$ for $1\leq i\leq k$. Then $G'$ and $H'$ are
TF-isomorphic but non-isomorphic.
\end{Prop}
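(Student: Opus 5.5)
We prove TF-isomorphism by checking the defining condition arc by arc, and non-isomorphism by counting short odd circuits.

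\emph{Setup.} Keep $(\alpha,\beta)$ unchanged on $V(G)\setminus\{u,v\}$ and extend it to the new vertices $u_1,\dots,u_{2k}$ as in the statement. Since $u,v$ are entangled, $\alpha(u)=\beta(v)=:p$ and $\alpha(v)=\beta(u)=:q$. These two vertices of $H$ are deleted. The $2k$ new images $\alpha(u_i)=\beta(u_{i+k})$ and $\beta(u_i)=\alpha(u_{i+k})$ replace them. So the extended $\alpha,\beta$ are bijections $V(G')\to V(H')$, and each pair $u_i,u_{i+k}$ is entangled.

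\emph{Step 1: arcs of $G'$ within $V(G)\setminus\{u,v\}$.} These arcs are untouched, and their images avoid $p,q$. They are therefore handled by the original TF-isomorphism.

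\emph{Step 2: circuit arcs.} An arc $(u_i,u_{i+1})$ of $C_k$ maps to $(\alpha(u_i),\beta(u_{i+1}))$. An arc $(u_{k+i},u_{k+i+1})$ of $C_k'$ maps to $(\beta(u_i),\alpha(u_{i+1}))$. With indices taken cyclically, these images are exactly the edges of the $2k$-circuit
\[
\alpha(u_1),\beta(u_2),\alpha(u_3),\dots
\]
Because $k$ is odd, walking around the circuit twice alternates correctly and closes up after $2k$ steps. This is the seed-pair computation of Figure~\ref{fig:genesis01}, and here it fixes precisely how $C_{2k}$ in $H'$ must be wired.

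\emph{Step 3: attachment arcs.} Take an arc $(u_i,i)$ with $i$ a former neighbour of $u$. Its image is $(\alpha(u_i),\beta(i))$. In $H$, the arc $(u,i)$ gave $(p,\beta(i))$. So $H'$ must attach each new vertex to the $H$-neighbours of $p$ or $q$ in the way these images prescribe. I would read the statement's ``introducing $C_{2k}$'' as including the edges $\{\alpha(u_i),\beta(i)\}$ and $\{\beta(u_i),\alpha(i)\}$ for $i\le k$, and similarly for $v$.

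The main obstacle is consistency. Each of these edges must be produced exactly once, and arcs and their reverses must match: $(i,u_i)$ maps to $(\alpha(i),\beta(u_i))$. I would check this by treating the extended map as a bijection of arcs and verifying that the arc set of $H'$ is defined to be its image. Symmetry of that image follows from the entanglement $\alpha(u_i)=\beta(u_{i+k})$. The hypothesis that $\deg u=\deg v=k$ is odd is what makes the numbers of attachment points and circuit vertices equal.

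\emph{Non-isomorphism.} I would compare a local invariant: the odd circuits of length $k$ through the new vertices. In $G'$ each new vertex lies on the triangle-like $k$-circuit $C_k$. In $H'$ the new vertices lie on $C_{2k}$, and I would argue that they lie on no $k$-circuit. Any odd closed trail through them must leave $C_{2k}$ through attachment edges, and it then acquires length greater than $k$ unless $H$ already had short odd cycles through $p$ or $q$. I expect this step to be delicate. A cleaner route would use Theorem~\ref{prop:involution04}: show that the guides of $G'$ and $H'$ induce strongly switching involutions of $\CDC(G')$ that are not conjugate, because their fixed-edge structures differ on the subgraph $C_k\cup C_k'$ versus $C_{2k}$. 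If that fails, I would fall back on counting $k$-cycles, using that $\alpha,\beta$ preserve all other adjacencies.
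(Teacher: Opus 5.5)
Your Steps~1 and~2 are correct and amount to a more explicit version of the paper's ``by construction''. Step~3 has a gap. Symmetry of the image of the attachment arcs does not follow from $\alpha(u_i)=\beta(u_{i+k})$. The arc $(u_i,i)$ goes to $(\alpha(u_i),\beta(i))=(\beta(u_{i+k}),\beta(i))$. Since $k+i$ is the only old neighbour of $u_{i+k}$, the reverse arc can only be the image of $(k+i,u_{i+k})$, and this forces $\alpha(k+i)=\beta(i)$. The arc $(i,u_i)$ likewise forces $\beta(k+i)=\alpha(i)$. So neighbour $i$ of $u$ and neighbour $k+i$ of $v$ must be an entangled pair, or one and the same pin. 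Entanglement of $u,v$ only tells you that $\alpha^{-1}\beta$ and $\beta^{-1}\alpha$ each map $N(u)$ bijectively onto $N(v)$. The labelling must be chosen so that these two maps agree on $N(u)$. That is an extra condition: a careless labelling fails, and the paper passes over this too. Also, oddness of $k$ is not what equalises the numbers of attachment points and circuit vertices, which is automatic. It is what makes the images close into a single $C_{2k}$, as you say in Step~2.

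The non-isomorphism part is the real gap, and it cannot be closed, because the claim is false in general. The $k$-circuits ``through the new vertices'' are not an isomorphism invariant, since nothing forces an isomorphism $G'\to H'$ to send new vertices to new vertices. Global $k$-cycle counts also fail, because $G-\{u,v\}$ and $H-\{\alpha(u),\alpha(v)\}$ are only TF-related, not isomorphic.

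Here is a counterexample. Let $G$ be the hexagon $w_1w_2\cdots w_6$ together with vertices $u,v,p_1,p_2,s,t$ and the edges $up_1,up_2,us,vp_1,vp_2,vt,p_1w_1,p_1w_4,p_2w_2,p_2w_5,sw_3,tw_6$. Let $H$ be the same graph with the hexagon replaced by the triangles $w_1w_2w_3$ and $w_4w_5w_6$. Then:
\begin{itemize}
\item $\alpha=(w_2\ w_5)(s\ t)$, $\beta=(w_1\ w_4)(w_3\ w_6)(u\ v)$ is a TF-isomorphism $G\to H$; on the hexagon it is the one of Figure~\ref{fig:firstexample};
\item $u,v$ are entangled, each of degree~$3$;
\item $G$ is connected, vertex-determining and non-bipartite, since $w_1p_1w_4w_3w_2$ is a $5$-cycle;
\item the labelling $(p_1,p_2,s)$ for $N(u)$ and $(p_1,p_2,t)$ for $N(v)$ meets the condition above.
\end{itemize}
In $G'$ the new triangles are attached to $p_1,p_2,s$ and to $p_1,p_2,t$, exactly as the old triangles of $H'$ are. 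The new hexagon $\alpha(u_1)\beta(u_2)\alpha(u_3)\beta(u_1)\alpha(u_2)\beta(u_3)$ of $H'$ is attached, in cyclic order, to $p_1,p_2,s,p_1,p_2,t$, exactly as the old hexagon of $G'$ is. Hence $G'\cong H'$, via a map fixing $p_1,p_2,s,t$ and exchanging new with old.

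The paper's own argument, that the new vertices are ``identifiable as those adjacent to the retained neighbours'', fails here for the same reason. Your appeal to Theorem~\ref{prop:involution04} gains nothing, since non-conjugacy of the guides is equivalent to the non-isomorphism you are trying to prove. Proving non-isomorphism requires an extra hypothesis, or an invariant specific to the graphs at hand, as in Step~1 of Proposition~\ref{prop:clawgeneralisation}.
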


\begin{proof}
That $(\alpha,\beta)$ extends to a TF-isomorphism from $G'$ to $H'$ follows
by construction: the antipodal pairing
$\alpha(u_i)=\beta(u_{i+k})$ is precisely the TF-isomorphism of the seed
pair $(C_k\cup C_k,\,C_{2k})$, and the images of all edges incident to
$u_1,\dots,u_{2k}$ match those of $H'$ by the extended definitions of
$\alpha$ and $\beta$~\cite{dissertation}.

For non-isomorphism, note that the $2k$ new circuit vertices in $G'$
induce two components ($C_k$ and $C_k'$), while in $H'$ they induce the
connected circuit $C_{2k}$. These vertices are identifiable as those
adjacent to the retained neighbours $1,\dots,2k$ of $u$ and $v$, so any
isomorphism $G'\to H'$ must carry the new circuit vertices of $G'$ to those
of $H'$, mapping a disconnected induced subgraph to a connected one, a
contradiction.
\end{proof}

An initial pair satisfying the hypotheses is readily available: any graph
containing two vertices with an odd number $k$ of common neighbours serves as
a starting point when taken together with itself. Increasing $k$ then
produces an infinite family of non-isomorphic TF-isomorphic pairs.

\begin{Rem}\label{rem:q5}
The constructions above are based on the seed pair $C_k\cup C_k$ and
$C_{2k}$, whose CDC is a union of even cycles. There exist TF-cousin pairs
that do not fit this pattern and are not produced by any seed-pair construction.
As a concrete example, take $G=Q_5$, the $5$-dimensional hypercube.
The $5$-cube is bipartite with $\CDC(Q_5)\cong Q_5\cup Q_5$.
Among the six graphs $H$ for which $Q_5\cong H\times K_2$
(enumerated in~\cite{abayasmerom2010}, Figure~6), exactly three are
loopless: those corresponding to the conjugacy classes with parameters
$(j,k)=(3,0)$, $(3,1)$, and $(5,0)$ in the notation of~\cite{abayasmerom2010}.
Each is a $5$-regular graph on $16$ vertices, obtained by adding a perfect
matching to $Q_4$~\cite{abayasmerom2010}. A direct computation confirms that all three satisfy
$H\times K_2\cong Q_5$ and have pairwise distinct adjacency spectra
(that is, the spectra of their adjacency matrices). The distinct
spectra make them pairwise non-isomorphic, and their common CDC makes them
TF-isomorphic by Theorem~\ref{thm:cdc}, so the three are mutually
TF-cousins. Their CDC is a hypercube, not a union of even cycles, so they
lie outside the reach of the construction above, but they are fully accounted
for by the conjugacy-class criterion of Theorem~\ref{prop:involution04}. This illustrates that the
lifting-and-folding framework is strictly more general than the seed-pair
method.
\end{Rem}

\section{Claw Graphs}
\label{sec:claw}

The constructions of Section~\ref{sec:constructions} establish that TF-cousin
pairs and unstable graphs exist at every order, but the resulting graphs are
not easy to describe in closed form: they depend on the particular choices made
at each step. This section presents an explicit infinite family where the
TF-cousin question reduces to a single arithmetic condition on the parameter~$n$.

The building block is $K_{1,3}$, the graph consisting of a centre vertex
joined to three leaves, commonly called a claw. The circuit $C_{6n}$ has $3n$ antipodal pairs,
which are grouped into $n$ triples; we attach one claw to each triple.
Each leaf connects to exactly one antipodal pair and so acts as a pin in the
sense of Definition~\ref{def:pinentangle}. The result, denoted $\CG(n)$, is
a connected cubic graph on $10n$ vertices. Its companion $\CG'(n)$ is
obtained by replacing the single circuit $C_{6n}$ with two disjoint copies of
$C_{3n}$, leaving all claw connections unchanged.

We show that $\CG(n)$ and $\CG'(n)$ are TF-cousins if and only if $n$ is
odd. For $n=1$, $\CG(1)$ is the Petersen graph and $\CG'(1)$ is its
TF-cousin, a pair first identified by Porcu~\cite{porcu}. Their
common CDC is the Desargues graph
$\mathrm{GP}(10,3)$~\cite{imrichpisanski2008}, which Krnc and
Pisanski~\cite{krncpisanski2019} showed to be the only generalised Petersen
graph that is the canonical double cover of two non-isomorphic graphs.

\begin{figure}[H]
\begin{center}
\scalebox{0.9}{
\begin{tikzpicture}[scale=0.7,node distance={16mm},very thick,above,
    main/.style={draw,circle,inner sep=1.3pt,fill}]
\node[main](u0) at (90:5)    [label={above:$u_{0}$}]{};
\node[main](u1) at (90+60:5) [label={left:$u_{1}$\ }]{};
\node[main](u2) at (90+120:5)[label={left:$u_{2}$\ }]{};
\node[main](u3) at (90+180:5)[label={below:$u_{3}$\ }]{};
\node[main](u4) at (90+240:5)[label={right:$u_{4}$\ }]{};
\node[main](u5) at (90+300:5)[label={right:$u_{5}$\ }]{};
\node[main](v0) at (0,0)[label={right:$c_0$}]{};
\node[main](v1)[above of=v0][label=left:$\ell_{0,0}$]{};
\node[main](v2)[below left  of=v0][label=below:$\ell_{0,1}$]{};
\node[main](v3)[below right of=v0][label=below:$\ell_{0,2}$]{};
\draw(v0)--(v1);\draw(v0)--(v2);\draw(v0)--(v3);
\draw(u0)--(u1);\draw(u1)--(u2);\draw(u2)--(u3);
\draw(u3)--(u4);\draw(u4)--(u5);\draw(u5)--(u0);
\draw[densely dotted](v1) to [bend right=10](u0);
\draw[densely dotted](v1) to [bend right=10](u3);
\draw[densely dotted](v2) to [bend left=20] (u1);
\draw[densely dotted](v2) to [bend right=20](u4);
\draw[densely dotted](v3) to [bend left=20] (u2);
\draw[densely dotted](v3) to [bend right=20](u5);
\end{tikzpicture}
}
\caption{The Petersen graph as a claw graph ($r=3$, $n=1$).
Vertices are named as in Definition~\ref{def:clawgraph}.}
\label{fig:clawgraph01}
\end{center}
\end{figure}

Writing $K_{1,r}$ for a claw with $r$ leaves, we fix $r=3$ throughout this
section so that the resulting graphs are cubic:
claw centres and leaves both have degree~$3$, matching the two circuit
neighbours of each circuit vertex. The general case $r\geq 2$ is addressed
in Remark~\ref{rem:generalk}.

\begin{Def}\label{def:clawgraph}
For a positive integer $n$, the \emph{claw graph} $\CG(n)$ is constructed
as follows. Take a circuit $C_{6n}$ with vertices
$u_0,\dots,u_{6n-1}$ and attach $n$ disjoint copies of $K_{1,3}$, one for
each residue $0\leq i<n$. The $i$-th copy has centre $c_i$ and leaves
$\ell_{i,0},\ell_{i,1},\ell_{i,2}$; leaf $\ell_{i,j}$ is joined to the
pair $\{u_{i+jn},\, u_{i+jn+3n}\}$ of antipodal circuit vertices
(indices modulo~$6n$). The result is a cubic graph on $10n$ vertices.

Its \emph{companion} $\CG'(n)$ is obtained from $\CG(n)$ by replacing the
single circuit $C_{6n}$ with two disjoint copies of $C_{3n}$, retaining the
vertices $u_0,\dots,u_{6n-1}$ and making $u_0,\dots,u_{3n-1}$ the first
copy and $u_{3n},\dots,u_{6n-1}$ the second. All claw connections are
unchanged.
\end{Def}

Note that $\CG(n)$ and $\CG'(n)$ share the same vertex set and the same
claw edges; they differ only in the circuit edges. For $n=1$,
$\CG(1)$ is the Petersen graph (Figure~\ref{fig:clawgraph01}); for $n=3$,
$\CG(3)$ and its companion $\CG'(3)$ are shown in
Figures~\ref{fig:clawgraph02} and~\ref{fig:clawgraph02b}.

\begin{figure}[tp]
\begin{center}
\scalebox{0.80}{
\begin{tikzpicture}[node distance={15mm},very thick,above,
    main/.style={draw,circle,inner sep=1.5pt,fill}]
\node[main](u0)  at (90:6) [label={above:$u_{0}=\alpha(u_0)=\beta(u_9)$}]{};
\node[main](u1)  at (110:6) [label={left:$u_{1}=\alpha(u_{10})=\beta(u_1)$\ }]{};
\node[main](u2)  at (130:6) [label={left:$u_{2}=\alpha(u_2)=\beta(u_{11})\ $}]{};
\node[main](u3)  at (150:6) [label={left:$u_{3}=\alpha(u_{12})=\beta(u_3)$\ }]{};
\node[main](u4)  at (170:6) [label={left:$u_{4}=\alpha(u_4)=\beta(u_{13})\ $}]{};
\node[main](u5)  at (190:6) [label={left:$u_{5}=\alpha(u_{14})=\beta(u_5)$\ }]{};
\node[main](u6)  at (210:6) [label={left:$u_{6}=\alpha(u_6)=\beta(u_{15})\ $}]{};
\node[main](u7)  at (230:6) [label={left:$u_{7}=\alpha(u_{16})=\beta(u_7)$\ }]{};
\node[main](u8)  at (250:6) [label={left:$u_{8}=\alpha(u_8)=\beta(u_{17})\ $}]{};
\node[main](u9)  at (270:6) [label={below:$u_{9}=\alpha(u_9)=\beta(u_0)$}]{};
\node[main](u10) at (290:6) [label={right:$u_{10}=\alpha(u_1)=\beta(u_{10})$}]{};
\node[main](u11) at (310:6) [label={right:$u_{11}=\alpha(u_{11})=\beta(u_2)$}]{};
\node[main](u12) at (330:6) [label={right:$u_{12}=\alpha(u_3)=\beta(u_{12})$}]{};
\node[main](u13) at (350:6) [label={right:$u_{13}=\alpha(u_{13})=\beta(u_4)$}]{};
\node[main](u14) at (10:6) [label={right:$u_{14}=\alpha(u_5)=\beta(u_{14})$}]{};
\node[main](u15) at (30:6) [label={right:$u_{15}=\alpha(u_{15})=\beta(u_6)$}]{};
\node[main](u16) at (50:6) [label={right:$u_{16}=\alpha(u_7)=\beta(u_{16})$}]{};
\node[main](u17) at (70:6) [label={right:$u_{17}=\alpha(u_{17})=\beta(u_8)$}]{};
\foreach \i/\j in {0/1,1/2,2/3,3/4,4/5,5/6,6/7,7/8,8/9,
  9/10,10/11,11/12,12/13,13/14,14/15,15/16,16/17,17/0}
  \draw(u\i)--(u\j);
\node[main](v0) at (90:2.6)[label={above:$c_0$}]{};
\node[main](v1)[below of=v0][label={[xshift=2mm]below:$\ell_{0,0}$}]{};
\node[main](v2)[right of=v1][label={[xshift=2mm, yshift=1mm]below:$\ell_{0,1}$}]{};
\node[main](v3)[left  of=v1][label={[xshift=4.6mm, yshift=4.6mm]below:$\ell_{0,2}$}]{};
\draw(v0)--(v1);\draw(v0)--(v2);\draw(v0)--(v3);
\node[main](w0) at (90+240:2.6)[label={above:$c_1$}]{};
\node[main](w1)[below of=w0][label={[xshift=2mm]below:$\ell_{1,0}$}]{};
\node[main](w2)[right of=w1][label={[xshift=2mm]below:$\ell_{1,1}$}]{};
\node[main](w3)[left  of=w1][label={[xshift=2mm]below:$\ell_{1,2}$}]{};
\draw(w0)--(w1);\draw(w0)--(w2);\draw(w0)--(w3);
\node[main](x0) at (90+120:2.6)[label={above:$c_2$}]{};
\node[main](x1)[below of=x0][label={[xshift=-1.5mm]below:$\ell_{2,0}$}]{};
\node[main](x2)[right of=x1][label={[xshift=3.7mm, yshift=2mm]below:$\ell_{2,1}$}]{};
\node[main](x3)[left  of=x1][label=below:$\ell_{2,2}$]{};
\draw(x0)--(x1);\draw(x0)--(x2);\draw(x0)--(x3);
\draw[densely dotted](v1) to [bend right=80](u0);
\draw[densely dotted](v1) to [bend right=20](u9);
\draw[densely dotted](v2) to [bend left=60] (u3);
\draw[densely dotted](v2) to [bend left=60] (u12);
\draw[densely dotted](v3) to [bend right=60](u6);
\draw[densely dotted](v3) to [bend right=60](u15);
\draw[dotted](w1) to [bend right=50](u1);
\draw[dotted](w1) to [bend right=20](u10);
\draw[dotted](w2) to [bend left=75] (u4);
\draw[dotted](w2) to [bend right=20](u13);
\draw[dotted](w3) to [bend left=40] (u7);
\draw[dotted](w3) to [bend left=40] (u16);
\draw[loosely dotted](x1) to [bend right=40](u2);
\draw[loosely dotted](x1) to [bend right=50](u11);
\draw[loosely dotted](x2) to [bend right=75](u5);
\draw[loosely dotted](x2) to [bend left=20] (u14);
\draw[loosely dotted](x3) to [bend left=40] (u8);
\draw[loosely dotted](x3) to [bend left=40] (u17);
\end{tikzpicture}
}
\caption{The claw graph $\CG(3)$: the case $r=3$, $n=3$.
Claw vertices are named as in Definition~\ref{def:clawgraph} and are fixed by both $\alpha$ and $\beta$; a circuit vertex labelled $u_m=\alpha(u_a)=\beta(u_b)$ is $u_m$, the common image of $u_a$ under $\alpha$ and of $u_b$ under $\beta$, where $u_a$ and $u_b$ are vertices of $\CG'(3)$ as drawn in Figure~\ref{fig:clawgraph02b}.}
\label{fig:clawgraph02}
\end{center}
\end{figure}

\begin{figure}[tp]
\begin{center}
\scalebox{0.85}{
\begin{tikzpicture}[scale=0.9, node distance={13mm},very thick,above,
    main/.style={draw,circle,inner sep=1.5pt,fill}]
\node[main](u0)  at (90+360*0/9:3) [label={above:$u_{0}$}]{};
\node[main](u1)  at (90+360*1/9:3) [label=left:$u_{1}$]{};
\node[main](u2)  at (90+360*2/9:3) [label=left:$u_{2}$]{};
\node[main](u3)  at (90+360*3/9:3) [label=left:$u_{3}$]{};
\node[main](u4)  at (90+360*4/9:3) [label=left:$u_{4}$]{};
\node[main](u5)  at (90+360*5/9:3) [label=right:$u_{5}$]{};
\node[main](u6)  at (90+360*6/9:3) [label=right:$u_{6}$]{};
\node[main](u7)  at (90+360*7/9:3) [label=right:$u_{7}$]{};
\node[main](u8)  at (90+360*8/9:3) [label=right:$u_{8}$]{};
\draw(u0)--(u1);\draw(u1)--(u2);\draw(u2)--(u3);
\draw(u3)--(u4);\draw(u4)--(u5);\draw(u5)--(u6);
\draw(u6)--(u7);\draw(u7)--(u8);\draw(u8)--(u0);
\tikzset{shift={(0,10)}}
\node[main](u9)  at (90+360*0/9:3) [label={above:$u_{9}$}]{};
\node[main](u10) at (90+360*1/9:3) [label=left:$u_{10}$]{};
\node[main](u11) at (90+360*2/9:3) [label=left:$u_{11}$]{};
\node[main](u12) at (90+360*3/9:3) [label=left:$u_{12}$]{};
\node[main](u13) at (90+360*4/9:3) [label={below:$u_{13}$}]{};
\node[main](u14) at (90+360*5/9:3) [label=right:$u_{14}$]{};
\node[main](u15) at (90+360*6/9:3) [label=right:$u_{15}$]{};
\node[main](u16) at (90+360*7/9:3) [label=right:$u_{16}$]{};
\node[main](u17) at (90+360*8/9:3) [label=right:$u_{17}$]{};
\draw(u9)--(u10);\draw(u10)--(u11);\draw(u11)--(u12);
\draw(u12)--(u13);\draw(u13)--(u14);\draw(u14)--(u15);
\draw(u15)--(u16);\draw(u16)--(u17);\draw(u17)--(u9);
\node[main](v0) at (7,1.5)[label=right:$c_0$]{};
\node[main](v1) at ([xshift=-10mm]v0)[label={above:$\ell_{0,0}$}]{};
\node[main](v2) at ([xshift=-13mm,yshift=13mm]v0)[label=right:$\ell_{0,1}$]{};
\node[main](v3) at ([xshift=-13mm,yshift=-13mm]v0)[label=right:$\ell_{0,2}$]{};
\draw(v0)--(v1);\draw(v0)--(v2);\draw(v0)--(v3);
\node[main](w0) at (-7,-5)[label=left:$c_1$]{};
\node[main](w1) at ([xshift=10mm]w0)[label={[xshift=0mm, yshift=-7.5mm]above:$\ell_{1,0}$}]{};
\node[main](w2) at ([xshift=13mm,yshift=-13mm]w0)[label=left:$\ell_{1,1}$]{};
\node[main](w3) at ([xshift=13mm,yshift=13mm]w0)[label=left:$\ell_{1,2}$]{};
\draw(w0)--(w1);\draw(w0)--(w2);\draw(w0)--(w3);
\node[main](x0) at (7,-11.5)[label=right:$c_2$]{};
\node[main](x1) at ([xshift=-10mm]x0)[label={[xshift=0mm, yshift=-7.5mm]above:$\ell_{2,0}$}]{};
\node[main](x2) at ([xshift=-13mm,yshift=13mm]x0)[label=right:$\ell_{2,1}$]{};
\node[main](x3) at ([xshift=-13mm,yshift=-13mm]x0)[label=right:$\ell_{2,2}$]{};
\draw(x0)--(x1);\draw(x0)--(x2);\draw(x0)--(x3);
\draw[densely dotted](v1) to [bend left=20]  (u0);
\draw[densely dotted](v1) to [bend right=10] (u9);
\draw[densely dotted](v2) to [bend right=19] (u3);
\draw[densely dotted](v2) to [bend right=15] (u12);
\draw[densely dotted](v3) to [bend left=23]  (u6);
\draw[densely dotted](v3) to [bend right=25] (u15);
\draw[dotted](w1) to [bend right=0]  (u1);
\draw[dotted](w1) to [bend left=26]  (u10);
\draw[dotted](w2) to [bend right=35] (u4);
\draw[dotted](w2) to [bend right=0]  (u13);
\draw[dotted](w3) to [bend left=0]   (u7);
\draw[dotted](w3) to [bend left=0]   (u16);
\draw[loosely dotted](x1) to [bend right=0]  (u2);
\draw[loosely dotted](x1) to [bend right=10] (u11);
\draw[loosely dotted](x2) to [bend left=10]  (u5);
\draw[loosely dotted](x2) to [bend left=0]   (u14);
\draw[loosely dotted](x3) to [bend right=16]   (u8);
\draw[loosely dotted](x3) to [bend left=0]   (u17);
\end{tikzpicture}
}
\caption{The companion $\CG'(3)$, which is TF-isomorphic but non-isomorphic to $\CG(3)$.
Vertices are named as in Definition~\ref{def:clawgraph}. The map of Step~3 is defined on these vertices; Figure~\ref{fig:clawgraph02} shows their images.}
\label{fig:clawgraph02b}
\end{center}
\end{figure}

\begin{Prop}\label{prop:clawgeneralisation}
$\CG(n)$ and $\CG'(n)$ are TF-cousins if and only if $n$ is odd.
\end{Prop}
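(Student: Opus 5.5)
The plan is to build an explicit TF-isomorphism from $\CG'(n)$ to $\CG(n)$ when $n$ is odd, and then to separate the graphs by invariants. For odd $n$ we show the two graphs are non-isomorphic. For even $n$ we show that their canonical double covers are non-isomorphic, which by Theorem~\ref{thm:cdc} rules out any TF-isomorphism.

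\emph{Step 1: the TF-isomorphism for odd $n$.} I would read off the pattern in Figure~\ref{fig:clawgraph02}. Both $\alpha$ and $\beta$ fix every claw vertex $c_i,\ell_{i,j}$. On the circuit vertices (indices modulo $6n$) set
\[
\alpha(u_j)=\begin{cases}u_j,& j\text{ even},\\ u_{j+3n},& j\text{ odd},\end{cases}\qquad
\beta(u_j)=\begin{cases}u_{j+3n},& j\text{ even},\\ u_j,& j\text{ odd}.\end{cases}
\]
These are well defined because $6n$ is even. Since $3n$ is odd, the map $j\mapsto j+3n$ swaps parity, so $\alpha$ and $\beta$ are involutive bijections.

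Now check arcs of $\CG'(n)$, splitting into three cases:
\begin{itemize}
\item[(i)] \emph{Claw edges $c_i\ell_{i,j}$.} These are fixed.
\item[(ii)] \emph{Leaf edges $\ell_{i,j}u_a$.} Their images go to $\alpha(\ell)=\ell$ and $\beta(u_a)\in\{u_a,u_{a+3n}\}$, which lies in the same antipodal pair, hence is adjacent to $\ell$ in $\CG(n)$. The same holds for the reversed arc with $\alpha(u_a)$.
\item[(iii)] \emph{Circuit arcs $(u_j,u_{j+1})$ inside a copy of $C_{3n}$.} Exactly one of $\alpha(u_j),\beta(u_{j+1})$ is shifted by $3n$, so the image is $u_j\sim u_{j+1\pm\dots}$, an edge of $C_{6n}$ up to the antipodal shift. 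The delicate point is the two wrap-around arcs $(u_{3n-1},u_0)$ and $(u_{6n-1},u_{3n})$. In $\CG(n)$ these must map to $\{u_{3n-1},u_{3n}\}$ and $\{u_{6n-1},u_0\}$. Here one uses that $0$ and $3n$ have opposite parity, so precisely the required endpoint is shifted.
\end{itemize}
Since both graphs are cubic with $15n$ edges, the arc map is a bijection. Hence $(\alpha,\beta)$ is a TF-isomorphism.

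\emph{Step 2: non-isomorphism for odd $n$.} For $n=1$ this is Petersen (girth $5$) against a graph containing triangles. In general I would compare short odd cycles.

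In $\CG'(n)$ the two circuits $C_{3n}$ are odd cycles of length $3n$.

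In $\CG(n)$, any odd cycle must use a claw path, because $C_{6n}$ is even. The available routes are:
\begin{itemize}
\item[(a)] a detour $u_a\,\ell\,u_{a+3n}$ closed by an arc, of length at least $3n+2$;
\item[(b)] routes through a centre $c_i$, with arcs between $u_{i+jn}$ and $u_{i+j'n}$.
\end{itemize}
I would enumerate these systematically and compare the number of odd cycles of length $\le 3n$ in each graph, or the number of $3n$-cycles that avoid all claw vertices. Claw centres are intrinsically characterised only up to the cubic structure, so the cleanest invariant is a cycle count.

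This step I expect to be the main obstacle, since both graphs also share the cycles of type (b), of length $n+4$ and similar. If a naive count fails, the fallback is Theorem~\ref{prop:involution04}: exhibit the two guides coming from the trivial and the non-trivial folding, and show they are not conjugate in $\Aut(\CDC(\CG(n)))$, for example by comparing their fixed-edge structure.

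\emph{Step 3: $n$ even.} Now $3n$ is even, so $\CG'(n)$ has even circuits. In $\CDC(\CG'(n))$ each $C_{3n}$ lifts to two disjoint $C_{3n}$. In $\CDC(\CG(n))$ the circuit $C_{6n}$ lifts to two copies of $C_{6n}$.

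I would show that $\CDC(\CG'(n))$ contains $3n$-cycles avoiding the lifted claws that have no counterpart in $\CDC(\CG(n))$. Concretely, I would count $3n$-cycles in each double cover, routing every cycle through lifted claw paths as in Step~2. The differing count then gives $\CDC(\CG(n))\not\cong\CDC(\CG'(n))$, and Theorem~\ref{thm:cdc} finishes the argument.

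Alternatively, the parity failure in Step~1 suggests a cleaner route. Any TF-isomorphism would have to respect the claw structure and hence act on circuit vertices within antipodal pairs. The wrap-around consistency check in Step~1(iii) then forces $3n$ to be odd.
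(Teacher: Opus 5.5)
Your Step~1 maps are not bijections, so the claimed TF-isomorphism fails as written. Because $3n$ is odd, each antipodal pair $\{u_j,u_{j+3n}\}$ contains one even and one odd index modulo $6n$. Your $\alpha$ sends \emph{both} members to the even one, and your $\beta$ sends both to the odd one; for $n=1$, $\alpha(u_0)=\alpha(u_3)=u_0$. This also contradicts Figure~\ref{fig:clawgraph02}, where $\alpha(u_9)=u_9$. The arc map is then not injective: both closing arcs $(u_{3n-1},u_0)$ and $(u_{6n-1},u_{3n})$ go to $(u_{3n-1},u_{3n})$, and $(\ell,u_a)$, $(\ell,u_{a+3n})$ have the same image. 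So your counting step collapses. The correct rule, used in the paper, takes the parity of $p(j)=j\bmod 3n$, i.e.\ of the position inside the copy of $C_{3n}$; this reverses your rule on the second copy. With it, an interior circuit arc has \emph{both or neither} endpoint shifted. Your ``exactly one'' would instead produce the non-edge $u_ju_{j+1+3n}$. Only the closing arcs of the two copies have exactly one endpoint shifted, and they land on $u_{3n-1}u_{3n}$ and $u_{6n-1}u_0$.

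Steps~2 and~3 are plans rather than proofs, and each misses the idea that makes it short. For non-isomorphism you give no invariant beyond $n=1$. Coarse cycle data coincide (Remark~\ref{rem:claw_props}: same girth and same number of $6$-cycles for $n=3$), and the guide-conjugacy fallback is harder than the original question. The paper deletes the claw centres and then the leaves, which now have degree~$2$. What remains is $C_{6n}$ in $\CG(n)$ but $C_{3n}\cup C_{3n}$ in $\CG'(n)$. For this to be isomorphism-invariant the centres must be intrinsic, which is exactly the point you raise and which the paper takes for granted. It does hold for $n\ge 3$. There the only cycles of length at most $6$ are the $3n$ ``ladders'' formed by two leaves and two circuit edges joining their antipodal pairs, and these cover every leaf and circuit vertex. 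A cycle through $c_i$ has length at least $7$, so the centres are precisely the vertices on no $6$-cycle. For $n=1$ your girth comparison suffices. For even $n$ the missing observation is bipartiteness. Colour $u_m$ by the parity of $m$, colour $\ell_{i,j}$ by the parity opposite to that of $i$, and colour $c_i$ by the parity of $i$; this is proper in both graphs because $jn$ and $3n$ are even. Each CDC is then two disjoint copies of its graph, so by Theorem~\ref{thm:cdc} TF-isomorphism coincides with isomorphism. Hence the pair cannot be TF-cousins either way, and no cycle counting in the covers is needed. Your ``cleaner route'' assumes without justification that every TF-isomorphism respects the claw structure, so it does not settle this case.
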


\begin{proof}
We prove the three parts in logical order: non-isomorphism first (a
pure graph-theoretic argument independent of TF-isomorphism theory), then
the $n$-even obstruction using it, then the $n$-odd construction.

\textit{Step~1: Non-isomorphism for all $n\geq 1$.}
Let $S = V(\CG(n))\setminus\{c_0,\dots,c_{n-1}\}$ denote all non-centre vertices.
In the subgraph $G[S]$ induced by $S$, every circuit vertex $u_m$ retains
all three of its neighbours (two circuit, one leaf, all in $S$) and has
degree~$3$, while every leaf $\ell_{i,j}$ loses its edge to $c_i$ and has
degree~$2$. Hence the circuit vertices are precisely the degree-$3$ vertices
of $G[S]$. Removing the degree-$2$ vertices from $G[S]$ recovers the
subgraph induced by the $6n$ circuit vertices alone. In $\CG(n)$ this is
$C_{6n}$, which is connected. In $\CG'(n)$ it is $C_{3n}\cup C_{3n}$,
which has two components. Any isomorphism $f:\CG(n)\to\CG'(n)$ would
commute with this procedure and so carry the (connected)
circuit subgraph of $\CG(n)$ to the (disconnected) circuit subgraph of
$\CG'(n)$, a contradiction. Hence $\CG(n)\not\cong\CG'(n)$.

\textit{Step~2: $n$ even implies the graphs are not TF-cousins.}
When $n$ is even, $3n$ is even, so $u_m$ and $u_{m+3n}$ have
indices of the same parity. Leaf $\ell_{i,j}$ is adjacent to $u_{i+jn}$ and
$u_{i+jn+3n}$, and since $jn$ and $3n$ are both even these indices have the
parity of $i$. Colour the circuit vertices by the parity of their index, give
each leaf $\ell_{i,j}$ the colour opposite to the parity of $i$, and each
centre $c_i$ the colour of the parity of $i$. This is a proper $2$-colouring
of $\CG(n)$.
The same colouring works for $\CG'(n)$ (the two copies of $C_{3n}$ are
even cycles, hence bipartite, with the same parity structure). Thus both
graphs are bipartite.

For any bipartite graph $G$, the CDC satisfies $\CDC(G)\cong G\cup G$
(the two sheets of the cover are isomorphic copies of $G$). Hence
$\CDC(\CG(n))\cong\CG(n)\cup\CG(n)$ and
$\CDC(\CG'(n))\cong\CG'(n)\cup\CG'(n)$. These are isomorphic if and only
if $\CG(n)\cong\CG'(n)$, which Step~1 shows is false. By
Theorem~\ref{thm:cdc}, $\CG(n)$ and $\CG'(n)$ are not TF-isomorphic.

\textit{Step~3: $n$ odd implies the graphs are TF-cousins.}
Let $n$ be odd, so that $3n$ is odd. Then $\CG'(n)$ contains the odd circuit
$C_{3n}$ and is not bipartite. Neither is $\CG(n)$: the circuit path from
$u_m$ to $u_{m+3n}$ has odd length $3n$, and closing it through the leaf
attached to that antipodal pair gives a cycle of length $3n+2$.

Write $p(m)$ for the residue of $m$ modulo $3n$, so that $u_m$ and
$u_{m+3n}$ are the two vertices of the antipodal pair indexed by $p(m)$.
Define $(\alpha,\beta)$ on $V(\CG'(n))$ by
\[
  \alpha(u_m)=
    \begin{cases} u_m, & p(m)\ \text{even},\\ u_{m+3n}, & p(m)\ \text{odd},\end{cases}
  \qquad
  \beta(u_m)=
    \begin{cases} u_{m+3n}, & p(m)\ \text{even},\\ u_m, & p(m)\ \text{odd},\end{cases}
\]
with the subscript $m+3n$ read modulo $6n$, and $\alpha(v)=\beta(v)=v$ for every claw vertex $v$.
Both maps send each antipodal pair to itself, so both are bijections
$V(\CG'(n))\to V(\CG(n))$; and $\beta=\sigma\alpha$, where $\sigma$ is
translation by $3n$ on circuit vertices and the identity on claw vertices.

We verify the condition of Section~1: $(u,v)$ is an arc of $\CG'(n)$ if and
only if $(\alpha(u),\beta(v))$ is an arc of $\CG(n)$. Each edge contributes
two arcs, and we take them in turn.
\newpage
\begin{itemize}
\item \emph{Claw arcs.} These are the arcs between a claw centre $c_i$ and
  one of its leaves $\ell_{i,j}$. Both maps fix every claw vertex, and the
  claws are the same in $\CG'(n)$ and $\CG(n)$, so each such arc is its own
  image and is an arc of both graphs. Since $c_i$ is adjacent only to
  $\ell_{i,0},\ell_{i,1},\ell_{i,2}$, every arc incident with a claw centre
  is of this kind.
\item \emph{Leaf--circuit arcs.} In either graph $\ell_{i,j}$ is joined to
  both vertices of the antipodal pair $\{u_{i+jn},u_{i+jn+3n}\}$ and to no
  other circuit vertex. Both maps fix $\ell_{i,j}$, and both send every
  antipodal pair to itself, so $\alpha(u_t)$ and $\beta(u_t)$ lie in the same
  pair as $u_t$ for every $t$. The arc $(\ell_{i,j},u_t)$ has image
  $(\ell_{i,j},\beta(u_t))$ and the arc $(u_t,\ell_{i,j})$ has image
  $(\alpha(u_t),\ell_{i,j})$; in either case the image is an arc of $\CG(n)$
  exactly when the arc it came from is.
\item \emph{Circuit arcs.} By definition $\alpha$ shifts $u_m$ by $3n$
  exactly when $p(m)$ is odd, and $\beta$ does so exactly when $p(m)$ is
  even. If
  $u_a$ and $u_b$ are consecutive in one copy of $C_{3n}$ then $p(a)$ and
  $p(b)$ have opposite parity. In the arc $(u_a,u_b)$ the map $\alpha$ is
  applied to $u_a$ and $\beta$ to $u_b$, so opposite parity means that
  $\alpha$ and $\beta$ either both shift or both do not, and the image is
  $(u_{a+3n},u_{b+3n})$ or $(u_a,u_b)$: an arc of $C_{6n}$ either way. The
  reverse arc $(u_b,u_a)$ is the same case with $a$ and $b$ interchanged. The
  arcs closing the copies have endpoints whose indices have the \emph{same}
  parity, because $3n-1$ is even, so for these exactly one of $\alpha$ and
  $\beta$ shifts. Their four images are $(u_{3n-1},u_{3n})$,
  $(u_{3n},u_{3n-1})$, $(u_0,u_{6n-1})$ and $(u_{6n-1},u_0)$, the arcs of
  $C_{6n}$ that join the copies.
\end{itemize}
Every arc of $\CG'(n)$ is thus carried to an arc of $\CG(n)$. Since $\alpha$
and $\beta$ are bijections, $(u,v)\mapsto(\alpha(u),\beta(v))$ is injective on
ordered pairs; and both graphs are cubic on $10n$ vertices, so each has
$30n$ arcs. The map is therefore a bijection from the arcs of $\CG'(n)$ onto
those of $\CG(n)$, which gives the converse implication.

Hence $(\alpha,\beta)$ is a TF-isomorphism from $\CG'(n)$ to $\CG(n)$.
Together with the non-isomorphism proved in Step~1, $\CG(n)$ and $\CG'(n)$ are
TF-cousins.

\end{proof}

\begin{Rem}\label{rem:generalk}
The same construction and proof work for any integer $r\geq 2$, replacing
$3n$ by $rn$ throughout. The seed pair $C_{rn}\cup C_{rn}$ and $C_{2rn}$
satisfies $\CDC(C_{rn}\cup C_{rn})\cong\CDC(C_{2rn})$ if and only if $rn$
is odd, by the same CDC component-count argument used above. When $rn$ is
even, which is forced whenever $r$ or $n$ is even, no TF-cousin
pair arises; when $rn$ is odd, requiring both $r$ and $n$ to be odd, the
antipodal TF-isomorphism extends to the full claw graph as before. For
$r=3$ the graphs are cubic; for $r>3$ the claw centres have degree~$r$
while all other vertices have degree~$3$, so the graphs are no longer
regular, but the non-isomorphism argument via induced subgraph connectivity
is unchanged.
\end{Rem}

\begin{Rem}\label{rem:claw_props}
The graphs $\CG(3)$ and $\CG'(3)$ share
several parameters: both are cubic, have 30~vertices and 45~edges, are
triangle-free with girth~6, have diameter~5, are bridgeless, and contain
exactly~9 six-cycles. Each has automorphism group of
order~$36$ with three vertex orbits: the three claw centres, the nine
leaves, and the eighteen circuit vertices. That these cannot merge is
immediate, since the centres form an orbit and hence so does their
neighbourhood, the set of leaves. In particular neither graph is
vertex-transitive. The adjacency spectra of the two graphs differ, however,
confirming that they are non-isomorphic despite sharing all the above
invariants. In particular, the spectral radius of both is~3 (as expected for
any cubic graph), but the full spectra are distinct.

By Proposition~\ref{prop:clawgeneralisation} and
Theorem~\ref{thm:cdc}, the canonical double covers of the two graphs are
isomorphic.
Neither graph is isomorphic to any
generalised Petersen graph $\mathrm{GP}(15,j)$ (none of which has girth~$6$).
Since circulant graphs are vertex-transitive, the orbits above rule out both
graphs being circulants. To the best of our knowledge, neither graph appears
in any published census of named cubic graphs.
\end{Rem}

\section{Concluding Remarks}
\label{sec:conclusion}

We have introduced a unified framework of lifting and guided folding
for two problems that have typically been treated independently: constructing
unstable graphs, and constructing TF-cousins. The central observation,
formalised in Theorem~\ref{thm:stabilitytfmain} and sharpened by
Lemmas~\ref{prop:involution01}--\ref{prop:involution03} and Theorem~\ref{prop:involution04}, is that both
problems are governed by the same algebraic datum: the conjugacy classes of
strongly switching involutions in the automorphism group of the common CDC.

The language of lifting and folding, adapted from voltage graph
theory~\cite{grosstucker01}, makes this relationship transparent.
Each guide $\phi$ determines a specific fold, and distinct conjugacy classes
of guides yield distinct TF-cousins sharing the same CDC
(Lemma~\ref{prop:involution03} and
Theorem~\ref{prop:involution04}).
The constructions of Section~\ref{sec:constructions} show concretely that
both problems have solutions at every order: the seed pair
$(C_k\cup C_k,\, C_{2k})$, together with the operations of adding
split-image edges and pins, yields TF-isomorphic pairs and
unstable graphs of every order.
A further construction (Proposition~\ref{prop:substitution}) produces new
TF-isomorphic pairs by replacing entangled vertices with odd circuits,
giving a second infinite family of examples. The claw-graph
family of Section~\ref{sec:claw} demonstrates that prominent graphs,
including the Petersen graph, arise naturally within this framework.

Several questions remain open. One concerns the role of the
seed pattern: it is present in every example we have examined, and we
conjecture that this is always so.

\medskip
\noindent\textbf{Conjecture.} \emph{Let $G$ and $H$ be non-isomorphic
graphs with $\CDC(G)\cong\CDC(H)$. Then there is an odd $k$ such that one
of $G$, $H$ contains two vertex-disjoint copies of $C_k$ and the other
contains $C_{2k}$. Every unstable asymmetric graph likewise contains both
$C_k$ and $C_{2k}$ for some odd $k$.}
\medskip

\noindent The two copies of $C_k$ and the circuit $C_{2k}$ must be allowed
to lie in different members. Requiring a single graph to contain both $C_k$
and $C_{2k}$ fails already for the Petersen graph, whose cycle lengths are
$5$, $6$, $8$ and $9$: it has neither a triangle nor a $C_{10}$, so no odd
$k$ serves. The hypothesis that the unstable
graph is asymmetric is equally necessary. Take $C_6$ with a pin
attached to an antipodal pair, the upper graph of
Figure~\ref{fig:sevenvertex}(a). It is connected, non-bipartite and
vertex-determining, and is unstable with index of instability $2$, yet its
only cycle lengths are $5$ and $6$, so again no odd $k$ serves. Its
automorphism group has order $4$, so it is not asymmetric and the second
statement does not apply to it. Locating the circuits in the common CDC
instead is not an option. A bipartite graph contains no odd circuit, so the
cover cannot carry the copies of $C_k$. As for $C_{2k}$, its presence in the
cover is equivalent to non-bipartiteness of the base graph, which our
standing hypotheses already impose.

We have verified the conjecture for all connected graphs on at most $9$
vertices using McKay's complete enumeration~\cite{mckay,mckaydata}, checking
CDC isomorphism directly in each case. The counts of connected
non-isomorphic TF-cousin pairs are: no pairs on $6$ vertices, $3$ on $7$,
$24$ on $8$, and $255$ on $9$. In every case the
conjecture holds with $k=3$.

The conjecture requires only some odd $k$, and $k=3$ is not always
available. Attaching a pin to an entangled pair of the seed pair
$(C_5\cup C_5,\,C_{10})$, as in Construction~\ref{con:pins}, gives a
TF-cousin pair on eleven vertices in which neither graph contains a
triangle. Here the conjecture is satisfied by the two $5$-circuits of one
member and the $10$-circuit of the other.

The statement about unstable asymmetric graphs
is vacuous for graphs on at most nine vertices, since the smallest known
unstable asymmetric graph has twelve vertices; for that graph the conjecture holds with $k=3$
and again with $k=5$. J.~Goodchild has confirmed the conjecture
independently, by an implementation of his own, for all connected graphs of
maximum degree at most $3$ on at most $13$ vertices, and for a further $420$
TF-cousin pairs on $14$ to $26$ vertices; no order in that second range was
searched exhaustively.

Collins and Sciriha~\cite{collins2020} ask whether graphs with
isomorphic CDCs necessarily share the same main eigenvalues, reporting that
no counterexample exists on at most eight vertices. The question remains
open.

The appendix of~\cite{collins2020} lists $32$ pairs. These are the
pairs on exactly eight vertices, obtained by filtering McKay's $12\,346$
graphs of that order, with no restriction on connectedness, bipartiteness or
vertex-determining properties; a pair on fewer than eight vertices appears
there with isolated vertices added to both members, so the $32$ account for
all pairs on at most eight vertices. It is explicitly noted
in~\cite{collins2020} that $\mathrm{CDC}(C_6)\cong\mathrm{CDC}(K_3\cup K_3)$,
an instance of a pair with a disconnected member. Our enumeration uses
McKay's connected graph files~\cite{mckay,mckaydata} and therefore covers
connected graphs only. Our $24$ connected pairs on eight vertices form a
subset of the $32$; the remaining $8$ have a disconnected member, three of
them being the seven-vertex pairs above with an isolated vertex added.

The number of TF-cousins of a given graph is determined in principle by the
conjugacy classes of strongly switching involutions in its CDC
(Theorem~\ref{prop:involution04}), and by Bychawski's refinement
(Remark~\ref{rem:bychawski}) the cardinality of each class gives the index
of instability of the corresponding base graph.
Proposition~\ref{prop:clawgeneralisation} establishes that $\CG'(n)$ is a TF-cousin of $\CG(n)$ for each odd $n$.
For $n\geq 3$ it is not the only one. Reversing the circuit of
$\CG(n)$ about an even position, and carrying the leaves and centres with
it, gives an automorphism $\alpha$ under which no vertex is adjacent to its
image; the map $(x,\varepsilon)\mapsto(\alpha(x),1-\varepsilon)$ is
therefore a guide for $\CDC(\CG(n))$. Folding along this guide produces a
further base graph, again cubic and connected, but containing triangles and
hence isomorphic to neither $\CG(n)$ nor $\CG'(n)$, so that $\CG(n)$ has at
least two TF-cousins. This was communicated to the
author by V.~Rozho\v{n} and R.~\v{S}\'{a}mal, having been obtained with the
Bolzano system~\cite{bolzano}. For $n=1$ this fold returns $\CG'(1)$
itself, and the Petersen graph has exactly one TF-cousin.
Determining the number of TF-cousins of $\CG(n)$ for general odd $n$ remains
open, and would be resolved by a complete description of the strongly
switching involutions of $\CDC(\CG(n))$.

The framework developed here and the group-theoretic
constructions of Bychawski~\cite{bychawski} approach the same objects from
opposite directions. Those families are obtained by existence arguments,
realising prescribed automorphism groups and prescribed numbers of
companions; the graphs are described through those groups rather than
exhibited individually. Here, by contrast, each TF-cousin is produced
explicitly, by folding along a guide. Two questions about the interaction of
the two approaches seem worth pursuing. The first concerns the guides for
his families. The bijection of his Theorem~8.6 guarantees that they exist;
what is not clear is how to describe them, since his construction reaches
the graphs through their automorphism groups rather than by folding. The second
concerns the conjecture above:
his construction produces TF-cousin pairs directly, at orders where checking
every graph is no longer feasible, and such pairs would make natural test cases.

Progress on any of these questions would show more clearly how the
algebraic structure of $\Aut(\CDC(G))$ governs the combinatorial
properties of its base graphs.

\section*{Acknowledgements}
The author thanks Josef Lauri and Raffaele Scapellato for their encouragement
and guidance during the early stages of this work. The term \emph{TF-cousin}
was used informally in our earlier collaboration; it is adopted here in the
hope that they find it a fitting name. The author is grateful to
Jacob Goodchild, who verified the computational claims of this paper
independently, extended the verification of the conjecture of
Section~\ref{sec:conclusion} well beyond the published range, and drew
attention to the counterexamples that led to its present formulation. He
worked with an implementation of his own, developed with AI assistance under
his direction. The author also thanks the two anonymous referees,
whose careful reading and detailed reports led to a number of corrections
and improvements throughout the paper.

\bibliographystyle{plain}
\bibliography{reference}

\end{document}